\newtheorem{theorem}{Theorem}
\newtheorem{lemma}[theorem]{Lemma}
\newtheorem{corollary}[theorem]{Corollary}
\theoremstyle{remark}
\newtheorem*{remark}{Remark}
\theoremstyle{definition}
\newtheorem{definition}[theorem]{Definition}
\numberwithin{theorem}{section}
\def\overnorm#1{\overline{#1}\vphantom{#1}}
\def\undernorm#1{\underline{#1}\vphantom{#1}}
\newcommand{\Hom}{\operatorname{Hom}}
\newcommand{\Spec}{\operatorname{Spec}}
\newcommand{\id}{\mathrm{id}}
\newcommand{\LogSch}{\mathbf{LogSch}}
\newcommand{\Sch}{\mathbf{Sch}}
\newcommand{\et}{\mathrm{\acute{e}t}}
\newcommand{\Mon}{\mathbf{N}}
\def\getslant#1{\strip@pt\fontdimen1 #1}
\newbox\usefulbox
\def\undernorm#1{\mathchoice
 {{\setbox\usefulbox=\hbox{$\m@th\displaystyle #1$}%
    \dimen@ \getslant\the\textfont\symletters \ht\usefulbox
    \underline{ \box\usefulbox \kern-\dimen@ }\kern\dimen@}}%
 {{\setbox\usefulbox=\hbox{$\m@th\textstyle #1$}%
    \dimen@ \getslant\the\textfont\symletters \ht\usefulbox
    \underline{ \box\usefulbox \kern-\dimen@ }\kern\dimen@}}%
 {{\setbox\usefulbox=\hbox{$\m@th\scriptstyle #1$}%
    \dimen@ \getslant\the\scriptfont\symletters \ht\usefulbox
    \underline{ \box\usefulbox \kern-\dimen@ }\kern\dimen@}}%
 {{\setbox\usefulbox=\hbox{$\m@th\scriptscriptstyle #1$}%
    \dimen@ \getslant\the\scriptscriptfont\symletters \ht\usefulbox
    \underline{ \box\usefulbox \kern-\dimen@ }\kern\dimen@}}%
 {}}
\def\overnorm#1{\mathchoice
 {{\setbox\usefulbox=\hbox{$\m@th\displaystyle #1$}%
    \dimen@ \getslant\the\textfont\symletters \ht\usefulbox
    \kern\dimen@ 
    \overline{\kern-\dimen@ \box\usefulbox } }}
 {{\setbox\usefulbox=\hbox{$\m@th\textstyle #1$}%
    \dimen@ \getslant\the\textfont\symletters \ht\usefulbox
    \kern\dimen@ 
    \overline{\kern-\dimen@ \box\usefulbox } }}
 {{\setbox\usefulbox=\hbox{$\m@th\scriptstyle #1$}%
    \dimen@ \getslant\the\scriptfont\symletters \ht\usefulbox
    \kern\dimen@ 
    \overline{\kern-\dimen@ \box\usefulbox } }}
 {{\setbox\usefulbox=\hbox{$\m@th\scriptscriptstyle #1$}%
    \dimen@ \getslant\the\scriptscriptfont\symletters \ht\usefulbox
    \kern\dimen@ 
    \overline{\kern-\dimen@ \box\usefulbox } }}%
 {}}
\begin{document}

\title{Uniqueness of minimal morphisms of logarithmic schemes}
\author{Jonathan Wise}
\thanks{Supported by an NSA Young Investigator's Grant, Award \#H98230-14-1-0107.}
\date{\today}

\begin{abstract}
We give a sufficient condition under which the moduli space of morphisms between logarithmic schemes is quasifinite under the moduli space of morphisms between the underlying schemes.  This implies that the moduli space of stable maps from logarithmic curves to a target logarithmic scheme is finite over the moduli space of stable maps, and therefore that it has a projective coarse moduli space when the target is projective.
\end{abstract}

\maketitle

\section{Introduction}
\label{sec:intro}

Chen~\cite{Chen}, Abramovich and Chen~\cite{AC}, and Gross and Siebert~\cite{GS} have recently constructed a moduli space of stable maps from logarithmic curves into logarithmic target schemes.  In \cite{minimal}, we extended the existence results of those papers beyond logarithmic curves and eliminated some technical restrictions that applied even to curves.  However, \cite{minimal} only asserts that the moduli space of maps between logarithmic schemes exists as an algebraic space that is locally of finite presentation; Chen showed that when the target has a rank~$1$ logarithmic structure, the moduli space of logarithmic maps is a disjoint union of pieces, each of which is finite over the moduli space of maps between underlying schemes~\cite[Proposition~3.7.5]{Chen}.  This implies in particular that the moduli space of stable maps from logarithmic curves has a projective coarse moduli space when the target variety is projective.  That result was extended to so-called generalized Deligne--Faltings logarithmic structures by Abramovich and Chen~\cite{AC}.

We will explain these results with a general criterion on the domain that applies to arbitrary logarithmic targets:

\begin{theorem} \label{thm:main}
	Let $S$ be a fine logarithmic scheme.  Let $X$ and $Y$ be fine logarithmic $S$-schemes, with $X$ also geometrically reduced, proper, and integral (in the logarithmic sense \cite[Definition~(4.3)]{Kato}) over $S$.  If the relative characteristic monoid of $X/S$ is trivial then the projection from the space of logarithmic maps to the space of maps of underlying schemes factors as an injection followed by an \'etale map
	\begin{equation*}
		\undernorm\Hom_{\LogSch/S}(X,Y) \rightarrow \mathscr T \rightarrow \Hom_{\Sch/\undernorm S}(\undernorm X, \undernorm Y)
	\end{equation*}
	where $\mathscr T$ is the space of \emph{types} (see Definition~\ref{def:type}).
\end{theorem}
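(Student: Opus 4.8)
The plan is to study the projection one underlying morphism at a time and to isolate the discrete combinatorial datum that a logarithmic lift carries. Fix a point $\undernorm f \colon \undernorm X \to \undernorm Y$ of $\Hom_{\Sch/\undernorm S}(\undernorm X,\undernorm Y)$. A logarithmic lift over $S$ is the extra datum of a homomorphism $f^\flat \colon \undernorm f^{-1} M_Y \to M_X$ over $\alpha_X$ that is compatible with the structure maps from $S$. Passing to characteristic (ghost) monoids kills the unit parts and produces a morphism $\overnorm f^\flat \colon \undernorm f^{-1}\overnorm M_Y \to \overnorm M_X$ of sheaves of fine monoids over $\overnorm M_S$; this induced morphism is the \emph{type} of Definition~\ref{def:type}. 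I would define $\mathscr T$ so that its $T$-points are the underlying morphisms together with a compatible morphism of characteristic sheaves, so that the projection factors through $\mathscr T$ tautologically by recording the type. What remains is to prove that $\mathscr T \to \Hom_{\Sch/\undernorm S}(\undernorm X,\undernorm Y)$ is \'etale and that $\undernorm\Hom_{\LogSch/S}(X,Y)\to\mathscr T$ is a monomorphism.

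For \'etaleness I would invoke the rigidity of characteristic sheaves. Since $X/S$ and $Y/S$ are fine and logarithmically integral, $\overnorm M_X$ and $\undernorm f^{-1}\overnorm M_Y$ are constructible sheaves of finitely generated monoids, and, crucially, they are insensitive to nilpotent thickenings. Hence over a square-zero extension a type extends in one and only one way, which is formal \'etaleness of $\mathscr T$ over the space of underlying morphisms; finite generation of the monoids supplies the local finite presentation, so the map is \'etale. This is also the step where the logarithmic integrality of $X/S$ is convenient, keeping the characteristic of the source constant over $S$ so that the possible types vary in an \'etale manner.

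The substance of the theorem is the monomorphism, i.e.\ the uniqueness of a lift of $\undernorm f$ with a prescribed type. Because the relative characteristic of $X/S$ is trivial, the structure map $a\colon X\to S$ is strict, so $M_X = a^* M_S$ and $\overnorm M_X = a^{-1}\overnorm M_S$. Given two lifts $f^\flat, g^\flat$ of the same $\undernorm f$ inducing the same type, each local section $m$ of $\undernorm f^{-1} M_Y$ satisfies $g^\flat(m) = u(m)\, f^\flat(m)$ for a unique $u(m)\in\mathcal O_X^\times$, and $m\mapsto u(m)$ is a homomorphism $\undernorm f^{-1}\overnorm M_Y^{\mathrm{gp}} \to \mathcal O_X^\times$ that is trivial on the image of $a^{-1}\overnorm M_S^{\mathrm{gp}}$ by $S$-compatibility; thus $u$ factors through the pullback along $\undernorm f$ of the relative characteristic group of $Y/S$. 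To see that $u$ is trivial I would argue direction by direction. For a section $m$ whose image $h=\undernorm f^\sharp\alpha_Y(m)$ is nonzero, applying $\alpha_X$ to $g^\flat(m)=u(m)f^\flat(m)$ gives $(u(m)-1)\,h=0$, and geometric integrality of $X/S$ makes $h$ a non-zero-divisor, so $u(m)=1$. The remaining, boundary, directions, where $h$ vanishes, are controlled using properness together with geometric reducedness and integrality: these force $a_*\mathcal O_X^\times=\mathcal O_S^\times$, so such a $u(m)$ is pulled back from $S$, and combined with the vanishing of $u$ on $a^{-1}\overnorm M_S^{\mathrm{gp}}$ and the strictness of $X/S$ this yields $u=1$.

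The main obstacle I expect is precisely this last vanishing. The non-zero-divisor directions are disposed of cleanly, but the boundary directions, where the source maps entirely into the zero locus of a section of $M_Y$, carry no pointwise constraint and must be controlled globally; this is where properness, geometric integrality, and the triviality of the relative characteristic of $X/S$ are all genuinely needed, to pin down the remaining units as pulled back from $S$ and then to eliminate them. Once $u$ is shown to be trivial, a logarithmic morphism over $S$ is determined by its underlying morphism together with its type, which is the asserted monomorphism, and the theorem follows.
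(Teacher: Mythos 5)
The decisive gap is in your injectivity argument: you have misidentified the geometric points of $\undernorm\Hom_{\LogSch/S}(X,Y)$. A point of this algebraic space is not a logarithmic lift of $\undernorm f$ over the \emph{fixed} base $S$; by Gillam's theorem it is a \emph{minimal} (basic) object, i.e.\ a pair $(N_S,\varphi)$ in which $N_S$ is a logarithmic structure on $\undernorm S$ under $M_S$ --- generally strictly larger than $M_S$ --- and $\varphi$ is a morphism after base change to $(\undernorm S,N_S)$, so that even the target $M_X$ of your $f^\flat$ gets replaced by the pushout $N_X$. Injectivity into $\mathscr T$ therefore means that any two minimal objects of the same type are \emph{isomorphic}, where the isomorphism is allowed to move $N_S$; it does not mean $u=1$, and in fact your claim that two lifts over a fixed base with the same type coincide is false. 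Example: let $S$ be the standard logarithmic point over an algebraically closed field $k$, let $\undernorm X=\mathbf{P}^1_k$ with $M_X=\pi^\ast M_S$ (so $M_{X/S}=0$), and let $M$ be the logarithmic structure associated to $\mathbf{N}^2\rightarrow\mathcal O_X$ sending both generators $e_1,e_2$ to $0$, with $\pi^\ast M_S\rightarrow M$ given by $1\mapsto e_1$ (this is $f^\ast M_Y$ for a logarithmic point $Y$ and $f=\pi$). Morphisms $M\rightarrow M_X$ over $\pi^\ast M_S$ are exactly $e_2\mapsto(n,v)$ with $n\geq 1$ and $v\in k^\ast$; for fixed $n$ these are pairwise distinct lifts of the same type, differing by arbitrary $u\in k^\ast$, so no argument can make $u$ trivial --- your final step (``$u$ is pulled back from $S$ and kills $\overnorm M_S^{\rm gp}$, hence $u=1$'') is a non sequitur, since nonzero homomorphisms from $\undernorm f^{-1}\overnorm M_Y^{\rm gp}/\pi^{-1}\overnorm M_S^{\rm gp}$ to $k^\ast$ abound. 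The theorem survives because none of these lifts is minimal: they are all pushforwards of a single minimal object whose base characteristic is $\overnorm Q_S=\mathbf{N}^2\supsetneq\overnorm M_S$, and the nontrivial differences $u$ are exactly absorbed by automorphisms of that minimal logarithmic structure. This absorption is the actual mechanism of the paper: Lemma~\ref{lem:quotient} identifies the fiber with the quotient groupoid $\bigl[\Hom(\overnorm M^{\rm gp}/\pi^\ast\overnorm M_S^{\rm gp},\mathcal O_X^\ast)\big/\Hom(\overnorm N_S^{\rm gp}/\overnorm M_S^{\rm gp},\mathcal O_S^\ast)\bigr]$, Lemma~\ref{lem:sharp} and Corollary~\ref{cor:characteristic} pin down $\overnorm N_S=\overnorm Q_S$ (this is where generic triviality of $M_{X/S}$ enters), and Theorem~\ref{thm:units} --- every homomorphism $\overnorm M\rightarrow\mathcal O_X^\ast$ factors through $\pi^\ast\mathcal O_S^\ast$ --- shows the acting group fills up the torsor, so the quotient is a single point. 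Note also that Theorem~\ref{thm:units} itself requires a valuative argument (the sheaf $\mathscr U$ and the closure of the graph in $\undernorm X\times\mathbf{P}^1$), because $\overnorm M^{\rm gp}$ need not be generated by global sections; your appeal to $a_\ast\mathcal O_X^\times=\mathcal O_S^\times$ only covers the globally generated case.

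Two further problems are worth flagging. First, your \'etaleness argument gives at best formal \'etaleness of the functor $\mathscr T$; it never shows $\mathscr T$ is an algebraic space. The paper obtains representability by an honest \'etale space over $\undernorm S$ by exhibiting $\mathscr T$ as the espace \'etal\'e of $\pi_\ast\Hom_{\et(\undernorm X)}(F,G)$, using constructibility of the characteristic sheaves and the proper base change theorem --- so properness of $\undernorm X/\undernorm S$ is already needed for this half of the statement, not only for injectivity. Second, ``integral'' in the hypothesis is Kato's logarithmic integrality of $X\rightarrow S$ (used so that $\undernorm X\times_{\undernorm S}\undernorm T$ underlies $X\times_S T$ when the base logarithmic structure varies), not scheme-theoretic integrality of the fibers, which are only assumed reduced; so the non-zero-divisor step in your ``interior directions'' argument is unjustified as stated, though that local step is not where the real difficulty lies.
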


The following corollary recovers and generalizes the earlier results of Chen, Abramovich--Chen, and Gross--Siebert, mentioned earlier:

\begin{corollary}
	Let $Y$ be a fine, saturated logarithmic scheme over $S$.  Let $M_u(Y)$ be the moduli space of stable maps from logarithmic curves to $Y$ of type $u$ and let $\undernorm M_u(Y)$ be its underlying algebraic stack and let $M(\undernorm Y)$ be the moduli space of stable maps to $Y$.  Then $\undernorm M_u(Y)$ is finite over $M(\undernorm Y)$.
\end{corollary}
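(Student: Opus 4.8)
The plan is to factor the argument through the standard characterization of finite morphisms: a representable, separated morphism of algebraic stacks that is quasifinite and universally closed is finite, so it suffices to show that $\undernorm M_u(Y) \to M(\undernorm Y)$ is quasifinite, separated, and universally closed. Both sides are of finite type over the base and the morphism is representable, so the genuine content is quasifiniteness on the one hand and the valuative criterion of properness on the other.

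For quasifiniteness I would realize $\undernorm M_u(Y) \to M(\undernorm Y)$ as a relative $\Hom$ space and invoke Theorem~\ref{thm:main}. Working over the moduli stack of prestable logarithmic curves, the universal logarithmic curve $\pi\colon \mathcal C \to \mathfrak M$ is proper, geometrically reduced, and logarithmically integral over $\mathfrak M$, and the underlying stack of $\mathfrak M$ is the usual moduli of prestable curves; thus a point of $M(\undernorm Y)$ is a stable map $\undernorm{\mathcal C} \to \undernorm Y$ together with the canonical logarithmic structure on the curve. Fixing the type $u$ pins down the contact orders along the markings and nodes, which is precisely the part of the relative characteristic monoid of $\mathcal C/\mathfrak M$ that is not pulled back from the base; after this rigidification the comparison takes the form to which Theorem~\ref{thm:main} applies, and the theorem exhibits the factorization
\begin{equation*}
\undernorm\Hom_{\LogSch/\mathfrak M}(\mathcal C, Y) \rightarrow \mathscr T \rightarrow \Hom_{\Sch/\undernorm{\mathfrak M}}(\undernorm{\mathcal C}, \undernorm Y)
\end{equation*}
as an injection followed by an \'etale map. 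Selecting the type $u$ cuts out a locally closed piece of $\mathscr T$, and since an injection between spaces locally of finite presentation is a monomorphism, hence unramified, while the second arrow is \'etale, the composite is quasifinite; this yields quasifiniteness of $\undernorm M_u(Y) \to M(\undernorm Y)$.

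For properness I would verify the valuative criterion. Given a discrete valuation ring $R$ with fraction field $K$, a logarithmic stable map of type $u$ over $\Spec K$, and an extension of the underlying stable map over $\Spec R$, I must extend the logarithmic map over $\Spec R$. The underlying family and the canonical logarithmic structure on the curve extend automatically, so the problem reduces to extending the morphism of logarithmic structures $f^{*}M_Y \to M_{\mathcal C}$ from the generic fibre to the whole trait. Here the hypothesis that $Y$ is fine and saturated is essential: saturation is exactly what guarantees that the required homomorphisms of characteristic monoids extend integrally over $R$, so that the logarithmic map fills in without a ramified base change, upgrading separatedness to genuine universal closedness. Uniqueness of the extension, hence separatedness, follows from Theorem~\ref{thm:main}: the injection into the space of types shows that a logarithmic map is determined by its underlying map together with its type, and the type is constant in a family, so two extensions agreeing over $\Spec K$ must agree over $\Spec R$.

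The main obstacle I anticipate is the reconciliation in the second paragraph: a logarithmic curve has nontrivial relative characteristic along its markings and nodes, so Theorem~\ref{thm:main} does not apply to $\mathcal C/\mathfrak M$ on the nose. The real work is to set up the relative $\Hom$ problem so that the marking- and node-contributions to the characteristic monoid are absorbed---by passing to the moduli of logarithmic curves and fixing the discrete type $u$---leaving a comparison with trivial relative characteristic to which the theorem genuinely applies. The existence half of the valuative criterion is the second delicate point, since it is exactly there that saturation of $Y$ must be exploited to extend the monoid homomorphism without introducing denominators.
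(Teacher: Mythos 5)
Your overall decomposition (finite $=$ quasifinite $+$ separated $+$ universally closed, with quasifiniteness coming from Theorem~\ref{thm:main}) matches the paper's strategy in outline, but there are two genuine gaps. First, you never confront the discrepancy between categories: Theorem~\ref{thm:main} is a statement about morphisms of \emph{fine} logarithmic schemes, whereas $M_u(Y)$ is by convention formed in the category of fine \emph{saturated} logarithmic schemes, so it is (the stack underlying) the saturation of the fine $\Hom$ space. The paper's proof explicitly inserts this step: one saturates $\undernorm\Hom_{\LogSch/S}(X,Y)$ and then uses that the saturation of a fine logarithmic scheme is quasifinite over it. Without this, your repeated appeal to the injectivity conclusion is applied to the wrong object: over the fs moduli space a logarithmic map is \emph{not} determined by its underlying map and its type (the fibers are finite, not singletons), so in particular your separatedness argument (``two extensions agreeing over $\Spec K$ must agree over $\Spec R$'') does not follow as stated. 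Relatedly, your ``main obstacle'' paragraph is aimed at a non-issue: fixing the type $u$ does not ``absorb'' the node and marking contributions to the relative characteristic monoid of $\mathcal C/\mathfrak M$, and nothing of the sort is needed. What makes the theorem applicable is that its working hypothesis (via Theorem~\ref{thm:qf}) is only that the relative characteristic vanish on a dense open subset of each geometric fiber, which logarithmic curves satisfy because their characteristic is concentrated at nodes and markings.

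Second, you treat finite-typeness and the valuative criterion as routine, but these are substantive theorems that the paper does not reprove; it cites them (boundedness from \cite[Proposition~1.5.6]{logbd} and the valuative criterion from \cite[Proposition~1.4.3]{logbd}, with local finite presentation from \cite[Theorem~1.1]{minimal}). Your assertion that ``both sides are of finite type over the base'' is exactly the boundedness theorem, which is far from formal, and your justification of the existence half of the valuative criterion --- that saturation of $Y$ ``guarantees that the required homomorphisms of characteristic monoids extend integrally over $R$'' --- is a slogan, not an argument: extending the morphism of logarithmic structures over the special fiber of the curve over $\Spec R$ (possibly only after a ramified base change of the trait, which must then be shown harmless) is the real content of that proposition. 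So while your skeleton is the right shape, the two load-bearing inputs (the fine-versus-fs comparison and the cited properness results) are missing or replaced by hand-waving.
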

\begin{proof}
We have already seen that the moduli space of stable logarithmic maps is locally of finite type \cite[Theorem~1.1]{minimal}, bounded \cite[Proposition~1.5.6]{logbd}, and satisfies the valuative criterion for properness \cite[Proposition~1.4.3]{logbd} over stable maps, so it remains only to show that the geometric fibers are finite.  Since logarithmic curves have generically trivial relative characteristic monoids, the finiteness is almost immediate from the theorem, which in fact has the stronger conclusion that the map
\begin{equation*}
\undernorm\Hom_{\LogSch/S}(X,Y) \rightarrow \Hom_{\Sch/\undernorm S}(\undernorm X, \undernorm Y)
\end{equation*}
is injective.  However, we are working in the category of fine logarithmic schemes, whereas prevailing convention dictates that the moduli space of stable maps from logarithmic curves to $Y$ be formed in the category of fine, saturated logarithmic schemes.  It is easy to fix this by saturating $\undernorm\Hom_{\LogSch/S}(X,Y)$.  We conclude by remarking that the underlying scheme of the saturation of a fine logarithmic scheme $H$ is always quasifinite over $H$.
\end{proof}

\subsection*{Conventions and notation}

We have retained the notation of \cite{minimal} as much as seemed reasonable.  In particular, $\undernorm X$ is the underlying space or stack of a logarithmic algebraic space or stack $X$.  We have consistently used underlined roman characters to represent schemes, even when no logarithmic scheme is present for the schemes to underlie, but we have not applied the same convention to morphisms of schemes, lest the underlines become overwhelming.

When $A$ and $B$ are objects that vary with objects of some category $\mathscr C$, we write $\Hom_{\mathscr C}(A,B)$ for the functor or fibered category on $\mathscr C$ of morphisms from $A$ to $B$.  This convention conflicts with the more standard convention of using the subscript to indicate in which category the homomorphisms should be taken.  Our perspective is that $\Hom$ should only be applied to pairs of objects of the same type, and that it should be possible to infer the type from the arguments.

\subsection*{Acknowledgements}

This paper was born out of a joint project with D.~Abramovich, Q.~Chen, and S.~Marcus to prove the same result for stable maps of logarithmic curves using a different method.  While my collaborators aver they have not contributed to the present paper and prefer not to be included as coauthors, I could not have written this paper without the intuition gained from the many examples we studied together.  I am very grateful to them for this, and for their generosity in allowing me to take credit for the solution.  I am particularly grateful to Dan Abramovich, whose comments improved the exposition of this paper considerably.

I would also like to thank M.~Chan for a suggestion that was very useful in our original approach to this problem, even though the method presented here circumvents it.

This work was supported by an NSA Young Investigator's grant, award number H98230-14-1-0107.

\section{A preliminary reduction}
\label{sec:reduction}

In order to simplify notation, we make a preliminary reduction.  To prove Theorem~\ref{thm:main}, it is sufficeint to work relative to $\Hom_{\Sch/\undernorm S}(\undernorm X, \undernorm Y)$ and assume that an $\undernorm S$-morphism $f : \undernorm X \rightarrow \undernorm Y$ has already been fixed.  Then we have an equivalence of categories:
\begin{equation} \label{eqn:6}
	\Hom_{\LogSch/S}(X,Y) = \Hom_{\LogSch/S}(f^\ast M_Y, M_X)
\end{equation}
On the left side $\Hom$ should be interpreted as morphisms of logarithmic schemes over $S$; on the right side, $\Hom$ refers to morphisms of logarithmic structures commuting with the structural maps from $\pi^\ast M_S$.

Equation~\eqref{eqn:6} informs us that we can dispense with $Y$ and work entirely on $X$, setting $M = f^\ast M_Y$.  The following assumptions will remain in force for the rest of the paper:
\begin{enumerate}[label=(\roman{*})]
\item $S = (\undernorm S, M_S)$ and $X = (\undernorm X, M_X)$ are fine logarithmic schemes and $\pi : X \rightarrow S$ is a morphism of logarithmic schemes;
\item $M$ is a fine logarithmic structure on $X$, equipped with a homomorphism of logarithmic structures $\pi^\ast M_S \rightarrow M$.
\end{enumerate}

Theorem~\ref{thm:main} breaks up into the following two statements:

\begin{theorem} \label{thm:types}
	Assume that $\undernorm X$ is proper over $\undernorm S$.  Then the space of types (Definition~\ref{def:type}) is \'etale over $\undernorm S$.
\end{theorem}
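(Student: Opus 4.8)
The plan is to exhibit the space of types as the relative global sections of a constructible sheaf on the small \'etale site of $\undernorm X$ and to read off \'etaleness from proper base change. By Definition~\ref{def:type}, a type over an $\undernorm S$-scheme $T$ is a homomorphism of characteristic monoids $\overnorm M \to \overnorm{M_X}$ on $\undernorm X_T := \undernorm X \times_{\undernorm S} T$ that commutes with the structural maps from $\overnorm{\pi^\ast M_S}$. Let $\mathcal H$ denote the sheaf on $\undernorm X_\et$ whose sections over an \'etale $U \to \undernorm X$ are the homomorphisms $\overnorm M|_U \to \overnorm{M_X}|_U$ compatible with $\overnorm{\pi^\ast M_S}$. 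Then a $T$-point of $\mathscr T$ is exactly a global section of $\mathcal H$ over $\undernorm X_T$, so that $\mathscr T(T) = \Gamma(\undernorm X_T, \mathcal H) = \Gamma(T, \pi_{T\ast}\mathcal H)$, where $\pi_T : \undernorm X_T \to T$ is the projection and I write $\mathcal H$ also for its pullback to $\undernorm X_T$. It therefore suffices to prove that $\pi_\ast \mathcal H$ is a constructible sheaf of sets on $\undernorm S_\et$ whose formation commutes with arbitrary base change, for then $\mathscr T$ is represented by the associated espace \'etal\'e, an algebraic space \'etale over $\undernorm S$.

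First I would verify that $\mathcal H$ is constructible. Because $S$ and $X$ are fine, $\overnorm M$ and $\overnorm{M_X}$ are constructible sheaves of fine monoids, locally constant along a common stratification of $\undernorm X$. At a geometric point $\bar x$ the stalk of $\mathcal H$ is computed over the strict henselization $\mathcal O_{\undernorm X, \bar x}^{\sh}$: it is the set of systems of monoid homomorphisms, one on each of the finitely many generizations of $\bar x$, that are compatible with the specialization maps of $\overnorm M$ and $\overnorm{M_X}$ and with $\overnorm{\pi^\ast M_S}$. Since there are only finitely many homomorphisms between two fine monoids, these stalks are finite, and a short argument along the stratification shows that $\mathcal H$ is locally constant on each stratum; hence $\mathcal H$ is constructible.

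The role of properness is now decisive. Conceptually, \'etaleness reflects the insensitivity of characteristic monoids to nilpotents: for a square-zero extension $T \hookrightarrow T'$ over $\undernorm S$ the thickening $\undernorm X_T \hookrightarrow \undernorm X_{T'}$ induces an equivalence of small \'etale topoi carrying $\mathcal H$ to $\mathcal H$, so sections lift uniquely. To turn this into an honest \'etale algebraic space I would invoke the degree-zero case of the proper base change theorem for constructible sheaves of sets: since $\undernorm X$ is proper over $\undernorm S$, the pushforward $\pi_\ast \mathcal H$ is constructible on $\undernorm S_\et$ and its formation commutes with base change. Consequently $\mathscr T(T) = \Gamma(T, (\pi_\ast \mathcal H)|_T)$ for every $T$, so $\mathscr T$ is the espace \'etal\'e of $\pi_\ast \mathcal H$, an algebraic space \'etale over $\undernorm S$, as desired.

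I expect the main obstacle to lie not in any deformation theory but in the combinatorics underlying constructibility and base change for sheaves of monoids. The internal structure of $\mathcal H$ is controlled by the specialization maps among the stalks of $\overnorm M$ and $\overnorm{M_X}$, so constructibility must be established along a stratification fine enough to trivialize those maps; and because $\mathcal H$ is a sheaf of sets rather than of torsion abelian groups, I must take care to use the degree-zero proper base change statement, which holds in this generality, rather than its more familiar cohomological companions. Verifying that $\pi_\ast \mathcal H$ genuinely commutes with base change---equivalently, that the formation of global types commutes with base change on $\undernorm S$---is the crux on which representability by an \'etale space depends.
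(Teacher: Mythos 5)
Your proposal follows the paper's own architecture: realize $\mathscr T$ as the sections of a Hom sheaf on $\undernorm X_{\et}$, push it forward to $\undernorm S$, invoke degree-zero proper base change, and identify $\mathscr T$ with the espace \'etal\'e of the pushforward. One correction to the setup before the main point: Definition~\ref{def:type} declares a type to be a homomorphism of sheaves of abelian groups $M^{\rm gp}/\pi^\ast M_S^{\rm gp} \rightarrow M_X^{\rm gp}/\pi^\ast M_S^{\rm gp}$, not a homomorphism of characteristic monoids $\overnorm M \rightarrow \overnorm M_X$ commuting with the maps from $\pi^\ast \overnorm M_S$; these are different functors, so you should run the argument with $F = \overnorm M^{\rm gp}/\pi^\ast \overnorm M_S^{\rm gp}$ and $G = \overnorm M_X^{\rm gp}/\pi^\ast \overnorm M_S^{\rm gp}$, as the paper does.

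The genuine gap is in the sentence where you declare that a $T$-point of $\mathscr T$ ``is exactly a global section of $\mathcal H$ over $\undernorm X_T$ \dots where I write $\mathcal H$ also for its pullback to $\undernorm X_T$.'' A $T$-point of $\mathscr T$ is a global section of the Hom sheaf \emph{of the pullbacks}, $\Hom_{\et(\undernorm X_T)}(g^\ast F, g^\ast G)$, whereas the object to which proper base change applies is the \emph{pullback of the Hom sheaf}, $g^\ast \Hom_{\et(\undernorm X)}(F,G)$. The identification
\begin{equation*}
	\Hom_{\et(\undernorm X_T)}(g^\ast F, g^\ast G) \simeq g^\ast \Hom_{\et(\undernorm X)}(F,G)
\end{equation*}
is precisely the displayed isomorphism~$(\ast)$ that occupies the first half of the paper's proof, where it is deduced from the finite presentation of constructible sheaves \cite[Proposition~IX.2.7]{sga4-3}: one presents $F$ as a coequalizer $F_1 \rightrightarrows F_0 \rightarrow F$ and uses that pullback preserves the resulting finite-limit description of the Hom sheaf. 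This step is not definitional, and constructibility of $\mathcal H$ alone does not supply it: internal Hom of \'etale sheaves does not commute with pullback in general, even for constructible arguments. For instance, on $\undernorm X = \mathbf{A}^1$ with $j : \mathbf{G}_m \hookrightarrow \mathbf{A}^1$ and $g : \{0\} \hookrightarrow \mathbf{A}^1$, taking $F = j_! \mathbf{Z}$ and $G = \mathbf{Z}$ gives $\Hom_{\et(\undernorm X)}(F,G) = j_\ast \mathbf{Z}$, whose stalk at the origin is $\mathbf{Z}$, while $\Hom(g^\ast F, g^\ast G) = \Hom(0,\mathbf{Z}) = 0$. Your own description of the stalks of $\mathcal H$ --- systems of homomorphisms at all generizations of $\bar x$ --- makes the danger visible, since after pulling back to $\undernorm X_T$ the available generizations change. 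By contrast, the step you single out as the crux, base change for $\pi_\ast \mathcal H$, is the cheap part: degree-zero proper base change holds for arbitrary sheaves of sets, with no constructibility hypothesis, and representing a sheaf on $\undernorm S_{\et}$ by an algebraic space \'etale over $\undernorm S$ needs no constructibility either. (Two of your finiteness claims are also false as stated: fine monoids can admit infinitely many homomorphisms between them, e.g.\ $\Hom(\mathbf{N},\mathbf{N})$ is infinite, and a strictly henselian local scheme generally has infinitely many generizations; what is finite is the number of strata.) So the missing ingredient is exactly the pullback compatibility of the Hom sheaf on $\undernorm X$; without it, your chain of identifications never gets started.
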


\begin{theorem} \label{thm:qf}
	Assume that $\undernorm S$ is the spectrum of an algebraically closed field, that $\pi^\ast M_S \rightarrow M_X$ is an integral homomorphism of monoids, that $\undernorm X$ is reduced, and that $\undernorm X$ is proper over $\undernorm S$.  If the sheaf of relative characteristic monoids $M_X / \pi^\ast M_S$ vanishes on a dense open subset of $X$ then for any type $u$ (see Definition~\ref{def:type}), then the underlying algebraic space of $\Hom_{\LogSch/S}(f^\ast M_Y, M)$ has at most one point.
\end{theorem}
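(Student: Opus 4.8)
The plan is to compare two morphisms directly rather than to analyze the moduli space abstractly. Fix a type $u$, and suppose $\varphi, \psi \colon M \to M_X$ are two morphisms of logarithmic structures over $\pi^\ast M_S$ giving rise to it; in particular they induce the same morphism $\overnorm M \to \overnorm{M_X}$ on characteristic monoids. A morphism of logarithmic structures restricts to the identity on $\mathcal O_X^\ast$, and $\varphi$ and $\psi$ agree on the image of $\pi^\ast M_S$, so for each local section $m$ of $M$ the sections $\varphi(m)$ and $\psi(m)$ have the same image in $\overnorm{M_X}$ and therefore differ by a unit, $\psi(m) = \delta(\overnorm m)\,\varphi(m)$. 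The assignment $\overnorm m \mapsto \delta(\overnorm m)$ defines a homomorphism from the relative characteristic $\overnorm M / \pi^\ast \overnorm{M_S}$ to $\mathcal O_X^\ast$, and $\varphi = \psi$ precisely when $\delta$ is trivial. The theorem is thus equivalent to the assertion that the only such $\delta$ compatible with the structural maps is the trivial one, so that if the space is nonempty it consists of a single point.

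The main tool is the compatibility of $\varphi$ and $\psi$ with the structure maps to $\mathcal O_X$. Applying $\alpha_{M_X}$ to $\psi(m) = \delta(\overnorm m)\varphi(m)$ and using $\alpha_{M_X}\varphi(m) = \alpha_M(m) = \alpha_{M_X}\psi(m)$ yields the identity
\[
  \bigl(\delta(\overnorm m) - 1\bigr)\,\alpha_M(m) = 0 \qquad \text{in } \mathcal O_X,
\]
for every local section $m$. Hence $\delta(\overnorm m) = 1$ wherever $\alpha_M(m)$ is a non-zerodivisor; in particular, since $\undernorm X$ is reduced and $\delta(\overnorm m)$ is itself a unit, $\delta(\overnorm m) = 1$ identically for every $m$ whose characteristic image $\overnorm\varphi(\overnorm m)$ in $\overnorm{M_X}$ is trivial, as then $\alpha_M(m)$ is a unit. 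This disposes of the part of $\delta$ that is detected by the structure map.

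The difficulty is concentrated in the sections $m$ whose characteristic image $\overnorm\varphi(\overnorm m)$ lies in the $S$-direction $\pi^\ast\overnorm{M_S}$: for these $\alpha_M(m) = \alpha_{M_X}(\varphi(m))$ is a unit multiple of the pullback of a non-unit of $M_S$, hence vanishes identically because $\undernorm S$ is the spectrum of a field, and the identity above becomes vacuous. For such $m$ the value $\delta(\overnorm m)$ is not seen by the structure map at all and must be pinned down by combining three ingredients. First, $\delta$ is a \emph{homomorphism} on $(\overnorm M / \pi^\ast\overnorm{M_S})^{\mathrm{gp}}$, so its values on $S$-direction sections are constrained by its already-trivial values on the sections handled above. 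Second, properness of $\undernorm X$ over $\undernorm S$ forces each value $\delta(\overnorm m)$, being a global unit on a proper reduced scheme over an algebraically closed field, to be locally constant; this rules out the local zero-divisor evasions (for instance at a node, where a nontrivial unit could otherwise annihilate $\alpha_M(m)$). Third, the integrality of $\pi^\ast M_S \to M_X$ controls the cospecialization of the constructible sheaf $\overnorm M / \pi^\ast\overnorm{M_S}$ from the dense open, where the relative characteristic of $X/S$ is trivial, onto the lower-dimensional strata, so that no ambiguity can hide on the boundary. Assembling these three ingredients — propagating the triviality of $\delta$ off the dense open and across the $S$-direction — is, I expect, the crux of the argument; once it is carried out one concludes $\delta \equiv 1$, whence $\varphi = \psi$ and the underlying space has at most one point over each type.
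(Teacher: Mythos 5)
There is a genuine gap, and it begins with the interpretation of the statement, before any argument starts. The space $\Hom_{\LogSch/S}(f^\ast M_Y, M)$ is a fibered category over $\Sch/\undernorm S$: a point of its underlying algebraic space is not a morphism $\varphi : M \rightarrow M_X$ of logarithmic structures over the \emph{fixed} $\pi^\ast M_S$, but a \emph{minimal} (basic) object $(N_S, \varphi)$ in the sense of Gillam, in which the logarithmic structure $N_S$ on $\undernorm S$ (receiving $M_S$) is part of the moduli data and varies, and $\varphi$ maps $M$ into the pushout structure $N_X$. Consequently your reduction ``the theorem is equivalent to: the only admissible $\delta$ is trivial'' is not the theorem, and worse, the statement you reduce to is false under the theorem's hypotheses. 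Concretely: let $S$ be the standard logarithmic point over $k$, let $\undernorm X = \mathbf{P}^1$ with $M_X = \pi^\ast M_S$ (so $M_{X/S} = 0$ identically and every hypothesis holds), and let $M = f^\ast M_Y$ where $f$ is the constant map to a logarithmic point $Y$ with $\overnorm M_Y = \mathbf{N}^2$ and $M_S \rightarrow M_Y$ the first-factor inclusion. A morphism $M \rightarrow M_X$ over $\pi^\ast M_S$ must send the first generator to the image of the generator of $M_S$ and may send the second generator $e_2$ to any $(n,g) \in \mathbf{N} \oplus k^\ast$ with $n \geq 1$: the structural-map identity is vacuous because $\alpha_M(e_2) = 0$ and $\alpha_{M_X}(n,g) = 0$ over a field, and the type sees none of these choices since $\overnorm M_X^{\rm gp}/\pi^\ast \overnorm M_S^{\rm gp} = 0$. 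So there are many distinct morphisms of the same type — many nontrivial admissible $\delta$ — exactly in the ``$S$-direction'' situation you flag as the difficulty; the theorem is nevertheless true because all these morphisms are induced from a single minimal object over a larger base structure $N_S$ with $\overnorm N_S = \overnorm Q_S = \mathbf{N}^2$, the twist by $\delta$ being undone by an automorphism of $N_S$ fixing $M_S$. The crux you defer at the end of your proposal is therefore not just missing: it cannot be carried out, because without the quotient by automorphisms of the varying base it is false.

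For comparison, the paper's route is: (i) points of the underlying space are minimal objects; (ii) Corollary~\ref{cor:characteristic} — and this, via Lemma~\ref{lem:sharp} and the $\pi_!$ machinery of Section~\ref{sec:adjoint}, is where the generic triviality and integrality hypotheses actually enter, not in any cospecialization argument on boundary strata — shows every minimal object has the same characteristic monoid $\overnorm Q_S$; (iii) Lemma~\ref{lem:quotient} identifies the set of such points with the quotient of the torsor $\Hom(\overnorm M^{\rm gp}/\pi^\ast \overnorm M_S^{\rm gp}, \mathcal O_X^\ast)$ (precisely your group of $\delta$'s) by the group $\Hom(\overnorm Q_S^{\rm gp}/\overnorm M_S^{\rm gp}, \mathcal O_S^\ast)$ of automorphisms of the base logarithmic structure; and (iv) Theorem~\ref{thm:units} (properness and reducedness, via the valuative-criterion sheaf $\mathscr U$) shows every $\delta$ takes values in $\pi^\ast \mathcal O_S^\ast$, while the identifications of Lemma~\ref{lem:obs} show that the acting group is exactly $\Hom(\overnorm M^{\rm gp}/\pi^\ast \overnorm M_S^{\rm gp}, \pi^\ast \mathcal O_S^\ast)$, so the quotient has at most one point. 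The sound fragments of your proposal — the identity $(\delta(\overnorm m) - 1)\alpha_M(m) = 0$ and the use of properness and reducedness to force units to be constant — are pieces of step (iv); but without minimality, the varying base structure, and the quotient of (ii)--(iii), they are aimed at a statement that is not the theorem and is not true.
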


The first of these is treated in Section~\ref{sec:types}, and the other takes up the balance of the paper.

\section{Types}
\label{sec:types}

\begin{definition} \label{def:type}
	Let notation be as in Section~\ref{sec:reduction}.  A \emph{type} consists of a homomorphism of sheaves of abelian groups:
	\begin{equation*}
		u : M^{\rm gp} / \pi^\ast M_S^{\rm gp} \rightarrow M_X^{\rm gp} / \pi^\ast M_S^{\rm gp}
	\end{equation*}
\end{definition}

We define:
\begin{align*}
	\mathscr T 
	& = \Hom_{\Sch/\undernorm S}(M^{\rm gp} / \pi^\ast M_S^{\rm gp}, M_X^{\rm gp} / \pi^\ast M_S^{\rm gp})  \\
	& = \Hom_{\Sch/\undernorm S}(\overnorm M^{\rm gp} / \pi^\ast \overnorm M_S^{\rm gp}, \overnorm M_X^{\rm gp} / \pi^\ast \overnorm M_S^{\rm gp})
\end{align*}
More explicitly, for any $\undernorm S$-scheme $\undernorm S'$, write $\undernorm X' = \undernorm X \mathop{\times}_{\undernorm S} \undernorm S'$, and then:
\begin{equation*}
	\mathscr T(S') = \Hom(g^\ast (M^{\rm gp} / \pi^\ast M_S^{\rm gp}), g^\ast (M_X^{\rm gp} / \pi^\ast M_S^{\rm gp})) 
\end{equation*}

\begin{proof}[Proof of Theorem~\ref{thm:types}]
	For the duration of the proof, we abbreviate $F = \overnorm M^{\rm gp} / \pi^\ast \overnorm M_S^{\rm gp}$ and $G = \overnorm M_X^{\rm gp} / \pi^\ast \overnorm M_S^{\rm gp}$.  The object of the proof is to show that $\Hom_{\Sch/\undernorm S}(F,G)$ is representable by an \'etale algebraic space over $\undernorm S$.  Since $F$ and $G$ are constructible, we have
	\begin{equation} \tag{$\ast$} \label{eqn:5}
		\Hom_{\et(\undernorm X')}(g^\ast F, g^\ast G) \simeq g^\ast \Hom_{\et(\undernorm X)}(F, G)
	\end{equation}
	for any morphism $g : \undernorm X' \rightarrow \undernorm X$.  Let us establish~\eqref{eqn:5}:  since these are sheaves it is sufficient to verify it at the stalks, so we can assume that $\undernorm X'$ is the spectrum of an algebraically closed field.  It is sufficient to prove this assertion in an affine neighborhood of each point of $\undernorm X'$, so we can assume $\undernorm X'$ is quasicomapct and quasiseparated; therefore by \cite[Proposition~IX.2.7]{sga4-3}, $F$ has a finite presentation, with $F_0$ and $F_1$ both free:
	\begin{equation*}
		F_1 \rightrightarrows F_0 \rightarrow F
	\end{equation*}
	Then $\Hom_{\et(\undernorm X)}(F,G)$ can be represented as an equalizer:
	\begin{equation*}
		\Hom_{\et(\undernorm X)}(F, G) \rightarrow \Hom_{\et(\undernorm X)}(F_0, G) \rightrightarrows \Hom_{\et(\undernorm X)}(F_1, G) 
	\end{equation*}
	But pullback preserves finite limits, and~\eqref{eqn:5} holds by definition when $F$ is free, so we have what we need:
	\begin{align*}
		g^\ast \Hom_{\et(\undernorm X)}(F,G) 
		& = g^\ast \ker \bigl( \Hom_{\et(\undernorm X)}(F_0, G) \rightrightarrows \Hom_{\et(\undernorm X)}(F_1, G) \bigr) \\
		& = \ker \bigl( \Hom_{\et(\undernorm X)}(g^\ast F_0, G) \rightrightarrows g^\ast \Hom_{\et(\undernorm X)}(g^\ast F_1, G) \bigr) \\
		& = \Hom_{\et(\undernorm X')}(g^\ast F, G)
	\end{align*}
	Thus the espace \'etal\'e of $\Hom_{\et(\undernorm X)}(F,G)$ represents $\Hom_{\Sch/\undernorm X}(F,G)$.

	To complete the proof of the theorem, we now observe that the espace \'etal\'e of $\pi_\ast \Hom_{\et(\undernorm X)}(F,G)$ represents $\Hom_{\Sch/\undernorm S}(F,G)$.  Indeed, for any $g : \undernorm S' \rightarrow \undernorm S$, we have:
	\begin{align*}
		\Hom_{\Sch/\undernorm S}(F,G)(\undernorm S') 
		& = \Hom(g^\ast F, g^\ast G) \\
		& = \Gamma(\undernorm S', \pi_\ast \Hom_{\et(\undernorm X')}(g^\ast F, g^\ast G)) \\
		& = \Gamma(\undernorm S', \pi_\ast g^\ast \Hom_{\et(\undernorm X')}(F, G)) & & \text{by \eqref{eqn:5}} \\
		& = \Gamma(\undernorm S', g^\ast \pi_\ast \Hom_{\et(\undernorm X)}(F,G)) & & \text{by proper base change} \\ & & & \text{\cite[Th\'eor\`eme~5.1~(i)]{sga4-3}} 
	\end{align*}
	This completes the proof.
\end{proof}

\section{The left adjoint to pullback for \'etale sheaves}
\label{sec:adjoint}

Throughout this section, $\pi : \undernorm X \rightarrow \undernorm S$ will be flat and locally of finite presentation, with reduced geometric fibers.  It was shown in \cite[Theorem~4.5]{minimal} that, under these conditions, the pullback functor $\pi^\ast$ for \'etale sheaves has a left adjoint, $\pi_!$.  In this section, we make some further observations about this functor.

We will use the notation $G^{\et}$ for the espace \'etal\'e of an \'etale sheaf~$G$.

\begin{lemma} \label{lem:pi_0}
For any \'etale sheaf $F$ on $\undernorm X$, the fiber of $\pi_! F$ over a geometric point $s$ of $S$ is $\pi_0(X_s)$, where $X_s$ is the fiber of $X$ over $s$.
\end{lemma}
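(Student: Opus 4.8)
The plan is to identify the fiber of $\pi_! F$ over $s$ with the stalk $s^\ast \pi_! F$, writing also $s$ for the corresponding morphism $s \to \undernorm S$, and then to compute that stalk by base change along $s$. Let $i : \undernorm X_s \to \undernorm X$ and $\pi_s : \undernorm X_s \to s$ denote the base change of $\pi$, where $\undernorm X_s = \undernorm X \times_{\undernorm S} s$. First I would show that the formation of $\pi_!$ is compatible with this base change, that is, $s^\ast \pi_! F \cong \pi_{s!}\, i^\ast F$. Granting this, the lemma reduces to computing the left adjoint $\pi_{s!}$ of $\pi_s^\ast$ for a morphism to the spectrum of an algebraically closed field, which is elementary.

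Indeed, \'etale sheaves on $s$ are just sets, and for an \'etale sheaf $G$ on $\undernorm X_s$ a morphism $G \to \pi_s^\ast T$ into a constant sheaf is the same as a locally constant map $G^{\et} \to T$ on the espace \'etal\'e, i.e.\ a function on the set $\pi_0(G^{\et})$ of its connected components. Thus $\Hom(G, \pi_s^\ast T) \cong \Hom(\pi_0(G^{\et}), T)$, naturally in the set $T$, so that $\pi_{s!} G = \pi_0(G^{\et})$. Applying this to $G = i^\ast F$ identifies the fiber with $\pi_0(F^{\et}_s)$, the set of connected components of the fiber $F^{\et}_s = F^{\et} \times_{\undernorm S} s$ of the espace \'etal\'e of $F$; in particular, when $F$ is the terminal sheaf one has $F^{\et} = \undernorm X$ and recovers $\pi_0(\undernorm X_s)$ as in the statement.

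The base change isomorphism $s^\ast \pi_! F \cong \pi_{s!}\, i^\ast F$ is the step I expect to be the main obstacle. Because the adjunction $(\pi_!, \pi^\ast)$ is local on $\undernorm S$ for the \'etale topology, $\pi_!$ commutes with \'etale base change; passing to the limit over \'etale neighborhoods of $s$, I may therefore assume that $\undernorm S$ is strictly local with closed point $s$, so that $s \to \undernorm S$ is a closed immersion. The comparison map $\pi_{s!}\, i^\ast F \to s^\ast \pi_! F$ is the one adjoint to $i^\ast F \to i^\ast \pi^\ast \pi_! F = \pi_s^\ast s^\ast \pi_! F$ coming from the unit of $(\pi_!, \pi^\ast)$, and to check that it is an isomorphism it suffices, by passing to right adjoints (taking mates), to verify that the associated comparison $\pi^\ast s_\ast \to i_\ast \pi_s^\ast$ is an isomorphism. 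For the closed immersion $s \hookrightarrow \undernorm S$ this is immediate on stalks: both sides have stalk $T$ at geometric points lying over $s$ and a single point elsewhere. Alternatively, and more geometrically, one may use the description of $\pi_!$ from \cite[Theorem~4.5]{minimal} as the relative connected-components space $\pi_0(F^{\et}/\undernorm S)$: since $\pi$ is flat and locally of finite presentation with reduced geometric fibers, this space is \'etale over $\undernorm S$ and its geometric fiber over $s$ is $\pi_0(F^{\et}_s)$ by construction, which is exactly the assertion of the lemma.
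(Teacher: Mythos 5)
Your overall route is the paper's: reduce by base change along $s \to \undernorm S$ to the case where the base is a geometric point, and there identify $\pi_{s!}G$ with $\pi_0(G^{\et})$ by observing that maps to a constant sheaf are locally constant functions on the espace \'etal\'e. Your second paragraph is, almost word for word, the paper's proof over a point, and your reading of the statement (the fiber is $\pi_0(F^{\et}_s)$ in general, recovering $\pi_0(\undernorm X_s)$ for $F$ terminal) matches how the paper uses the lemma later. The gap is in the base change step, which you correctly flag as the main obstacle but then do not actually overcome. The paper disposes of it in one line by citing \cite[Corollary~4.5.1]{minimal}, which says that $\pi_!$ commutes with \emph{arbitrary} change of base; you instead try to derive it, and your first move --- ``because the adjunction $(\pi_!,\pi^\ast)$ is local on $\undernorm S$ for the \'etale topology, $\pi_!$ commutes with \'etale base change'' --- is not formal. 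By the same mate argument you use for the closed immersion, \'etale base change for $\pi_!$ along an \'etale $j : V \to \undernorm S$ is equivalent to $\pi^\ast j_\ast G \to j_{X\ast}\pi_V^\ast G$ being an isomorphism for all $G$, equivalently to the projection formula $\pi_!(F \times \pi^\ast h_V) \simeq \pi_! F \times h_V$. This genuinely uses the hypotheses on $\pi$ (flat, locally of finite presentation, \emph{reduced geometric fibers}) and is not a consequence of the mere existence of the adjunction: for the finite flat map $\Spec k[x,y]/(xy) \to \Spec k[t]$, $t \mapsto x+y$, whose special fiber is non-reduced, and $j$ the inclusion of $\{t \neq 0\}$, the mate fails at the node --- the punctured strictly henselian local scheme upstairs has two connected components, so one side has stalk $G$ and the other $G \times G$. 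So the ``locality'' you invoke is precisely the content of the base change theorem you are trying not to cite.

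There are two smaller soft spots in the same step: passing to the limit over \'etale neighborhoods of $s$ requires knowing that $\pi_!$ is compatible with this inverse limit of schemes (again not formal for an arbitrary sheaf $F$), and after strict henselization the map $s \to \undernorm S$ is a closed immersion only up to a purely inseparable extension of the residue field (harmless for \'etale sheaves, but it should be said). None of this dooms the proposal, because your fallback --- quoting the construction of $\pi_!$ in \cite[Theorem~4.5]{minimal} as a relative $\pi_0$ together with its base change property --- is in substance exactly what the paper does; but as you phrase it (``its geometric fiber over $s$ is $\pi_0(F^{\et}_s)$ by construction'') you are very nearly assuming the statement being proved. The honest repair is to cite \cite[Corollary~4.5.1]{minimal} for base change, reduce to a geometric point, and keep your computation over the point as is.
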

\begin{proof}
Since $\pi_!$ commutes with arbitrary change of base \cite[Corollary~4.5.1]{minimal} we may assume that $\undernorm S$ is the spectrum of an algebraically closed field.  Since $\pi^\ast(G^{\et}) = G^{\et} \mathop\times_{\undernorm S} \undernorm X$ for any \'etale sheaf $G$ on $\undernorm S$ we have:
\begin{equation*}
\Hom(F, \pi^\ast G) = \Hom(F^{\et}, G^{\et} \mathop\times_{\undernorm S} \undernorm X) = \Hom(F^{\et}, G^{\et}) = \Hom(\pi_0(F^{\et}/\undernorm S), G^{\et})
\end{equation*}
Thus $\pi_0(F^{\et}/\undernorm S)$ and $\pi_!(F)^{\et}$ represent the same functor, hence are isomorphic.
\end{proof}

For the next few statements, we will use notation $\pi^{\Mon}_!$ for the left adjoint to $\pi^\ast$ \emph{in the category of integral monoids} \cite[Proposition~4.7]{minimal}, because this functor does not agree with $\pi_!$ upon passage to the underlying sheaf of sets.  In later sections we will only be interested in $\pi_!^{\Mon}$ and not in $\pi_!$, so we will discard the superscript from the former.

The functor $\pi^\ast$ does commute with passage from commutative monoids to their underlying sets, so it follows formally that its left adjoint respects passage from sets to their freely generated monoids:  for any sheaf of sets $F$ on $\undernorm X$, we have
\begin{equation*}
\pi_!^{\Mon}(\mathbf{N} F) = \mathbf{N} \pi_!(F) ,
\end{equation*}
where we have written $\mathbf{N} F$ for the monoid freely generated by $F$.

\begin{lemma} \label{lem:mon-surj}
\begin{enumerate}[label=(\roman{*})]
	\item \label{lem:mon-surj:1} If $F$ is an \'etale sheaf of integral monoids on $\undernorm X$ then $\pi_!^{\Mon} F$ is generated by $\pi_! F$.
\item The functor $\pi_!^{\Mon}$ preserves surjections.
\end{enumerate}
\end{lemma}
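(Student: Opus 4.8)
The plan is to prove (i) by a formal argument with the two adjunctions, and then to deduce (ii) from (i) together with the fact that $\pi_!$ preserves surjections of sheaves of sets. Write $U$ for the forgetful functor from (sheaves of) integral monoids to (sheaves of) sets, and recall from the discussion above that $\pi^\ast$ commutes with $U$, so that $U\pi^\ast = \pi^\ast U$. For an integral monoid $F$ on $\undernorm X$, the monoid-level unit $\eta : F \to \pi^\ast \pi_!^{\Mon} F$ yields, after applying $U$ and using $U\pi^\ast = \pi^\ast U$, a morphism of sheaves of sets $UF \to \pi^\ast U\pi_!^{\Mon} F$, whose $(\pi_! \dashv \pi^\ast)$-adjoint is a natural map $\phi : \pi_! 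F \to U\pi_!^{\Mon}F$. This $\phi$ is the comparison map implicit in the statement, and assertion~(i) is precisely that its image generates $\pi_!^{\Mon}F$.

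To prove (i), let $Q \subseteq \pi_!^{\Mon}F$ be the submonoid generated by the image of $\phi$; as a submonoid of an integral monoid it is integral. First I would verify that $\eta$ factors through the submonoid $\pi^\ast Q \subseteq \pi^\ast \pi_!^{\Mon}F$. On underlying sets this is built into the construction: tracing the adjunction, $U\eta$ is the composite of the set-level unit $UF \to \pi^\ast\pi_! F$ with $\pi^\ast\phi$, and since the image of $\phi$ lies in $UQ$ and $\pi^\ast$ is exact, $\pi^\ast\phi$ takes values in $\pi^\ast UQ = U\pi^\ast Q$. Because $\eta$ is a homomorphism and $\pi^\ast Q$ is a submonoid, $\eta$ corestricts to a homomorphism $\bar\eta : F \to \pi^\ast Q$. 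Now the universal property of $\pi_!^{\Mon}$ converts $\bar\eta$ into a homomorphism $\psi : \pi_!^{\Mon}F \to Q$, and chasing the adjunction shows that $j \circ \psi = \id$, where $j : Q \hookrightarrow \pi_!^{\Mon}F$ is the inclusion: indeed both $j\circ\psi$ and $\id$ are adjoint to $\eta$. Since $j$ is a monomorphism with $j\circ\psi = \id$, it is an isomorphism, so $Q = \pi_!^{\Mon}F$, which is (i).

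For (ii), let $a : F \to F'$ be a surjection of integral monoid sheaves, by which I mean that $Ua$ is an epimorphism of \'etale sheaves of sets. Because $\pi_!$ is a left adjoint it preserves epimorphisms, and in the \'etale topos epimorphisms coincide with the stalkwise surjections; hence $\pi_! Ua$ is again surjective. Naturality of $\phi$ in $F$ gives a commutative square with horizontal maps $\pi_! Ua$ and $U\pi_!^{\Mon}(a)$ and vertical maps $\phi$ and $\phi'$. Since $\pi_! Ua$ is surjective, the image of $\phi'$ equals the image of $\phi' \circ \pi_! Ua = U\pi_!^{\Mon}(a) \circ \phi$, which is contained in the image of $U\pi_!^{\Mon}(a)$. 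By part~(i) the image of $\phi'$ generates $\pi_!^{\Mon}F'$, so the submonoid that is the image of $\pi_!^{\Mon}(a)$ contains a generating set and therefore equals $\pi_!^{\Mon}F'$. Thus $\pi_!^{\Mon}(a)$ is surjective.

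I expect the main obstacle to be the bookkeeping required to keep the two adjunctions $\pi_! \dashv \pi^\ast$ and $\pi_!^{\Mon} \dashv \pi^\ast$ straight across the forgetful functor $U$, and in particular the clean verification that the monoid unit $\eta$ factors through $\pi^\ast Q$ and that the resulting homomorphism $\psi$ is a genuine section of $j$. Once (i) is established, (ii) is essentially formal, using only that $\pi_!$ preserves surjections and that the image of a homomorphism is a submonoid; a reader who prefers a concrete check could alternatively reduce to the case where $\undernorm S$ is the spectrum of an algebraically closed field via the base-change compatibility of $\pi_!$ and compute stalks using Lemma~\ref{lem:pi_0}, but the adjunction argument avoids this entirely.
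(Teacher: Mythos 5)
Your proof is correct and takes essentially the same approach as the paper: in (i) you form the submonoid generated by the image of the comparison map $\pi_! F \to \pi_!^{\Mon}F$, show the monoid unit factors through its pullback, and use the adjunction to split the inclusion, exactly as the paper does (your $j\circ\psi=\id$ argument via uniqueness of adjoints is the paper's triangle-identity step); part (ii) is the paper's argument verbatim. The only difference is that you track the forgetful functor and the naturality of the comparison map explicitly, which the paper leaves implicit.
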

\begin{proof}
Let $G \subset \pi^{\Mon}_! F$ be the submonoid generated by $\pi_! F$.  Then $F \rightarrow \pi^\ast \pi_!^{\Mon} F$ factors:
\begin{equation*}
F \rightarrow \pi^\ast \pi_! F \rightarrow \pi^\ast G \subset \pi^\ast \pi_!^{\Mon} F
\end{equation*}
Of course, the first map is just a morphism of sheaves of sets, but the composition is a monoid homomorphism, so upon applying $\pi_!^{\Mon}$ again, we get a commutative diagram:
\begin{equation*} \xymatrix{
\pi_!^{\Mon} F \ar[r] & \pi_!^{\Mon} \pi^\ast G \ar[r] \ar[d] & \pi_!^{\Mon} \pi^\ast \pi_!^{\Mon} F \ar[d] \\
& G \ar[r] & \pi_!^{\Mon} F
} \end{equation*}
Adjunction implies that the composition $\pi_!^{\Mon} F \rightarrow \pi_!^{\Mon} F$ is the identity, from which it follows that $G \rightarrow \pi_!^{\Mon} F$ is surjective.  This proves the first claim.

For the second, consider a surjection $H \rightarrow F$ of sheaves of integral monoids.  Then $\pi_! H \rightarrow \pi_! F$ is surjective, and $\pi_! F$ generates $\pi_!^{\Mon} F$, so the image of $\pi_! H$ in $\pi_!^{\Mon} F$ generates $\pi_!^{\Mon} F$.  Therefore $\pi_!^{\Mon} H \rightarrow \pi_!^{\Mon} F$ is surjective.
\end{proof}

\begin{lemma} \label{lem:gen-gen}
Let $i : \undernorm U \rightarrow \undernorm X$ be the inclusion of an open subset such that $\undernorm U_s \subset \undernorm X_s$ is dense for every geometric point $s$ of $\undernorm S$.  
\begin{enumerate}[label=(\roman{*})]
\item For any \'etale sheaf of sets $F$ on $\undernorm X$, the map $\pi_! i_! i^\ast F \rightarrow \pi_! F$ is surjective.
\item For any \'etale sheaf of integral monoids $F$ on $\undernorm X$, the homomorphism $\pi_!^{\Mon} i_!^{\Mon} i^\ast F \rightarrow \pi_!^{\Mon} F$ is surjective.
\end{enumerate}
\end{lemma}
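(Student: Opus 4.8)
The plan is to prove (i) by an explicit computation on geometric fibers, and then to deduce (ii) formally from (i) together with Lemma~\ref{lem:mon-surj}. For (i), surjectivity of a map of \'etale sheaves on $\undernorm S$ may be checked on geometric stalks, and since $\pi_!$ commutes with base change \cite[Corollary~4.5.1]{minimal}, I would reduce to the case where $\undernorm S$ is the spectrum of an algebraically closed field $k$. In that case the computation in the proof of Lemma~\ref{lem:pi_0} identifies $\pi_! G$ with the set $\pi_0(G^{\et})$ of connected components of the espace \'etal\'e, for any \'etale sheaf of sets $G$. Since $i$ is an open immersion, the espace \'etal\'e of $i_! i^\ast F$ is the open subspace $F^{\et} \mathop\times_{\undernorm X} \undernorm U \subset F^{\et}$, and the map in question becomes the map $\pi_0(F^{\et}\mathop\times_{\undernorm X}\undernorm U) \to \pi_0(F^{\et})$ induced by this inclusion. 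To see it is surjective, let $C$ be a connected component of $F^{\et}$; as $F^{\et}$ is \'etale over $\undernorm X$, the image of $C$ in $\undernorm X$ is open and nonempty, hence meets the dense open $\undernorm U$, so $C$ meets $F^{\et}\mathop\times_{\undernorm X}\undernorm U$ and therefore lies in the image. The only real content here is that \'etale maps are open and that a dense open subset meets every nonempty open subset.

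For (ii), the key observation is that one should \emph{not} expect the counit $\varepsilon : i_!^{\Mon} i^\ast F \to F$ to be surjective on its own---its stalks away from $\undernorm U$ are the trivial monoid---so the density hypothesis must enter through $\pi_!^{\Mon}$, exactly as it did in (i). I would therefore compare the set-level and monoid-level left adjoints. For each \'etale sheaf of integral monoids $G$ on $\undernorm X$ there is a natural comparison homomorphism $\pi_! G \to \pi_!^{\Mon} G$ (the one appearing in Lemma~\ref{lem:mon-surj}, carrying the set-theoretic left adjoint of the underlying set into the monoid left adjoint), and an analogous comparison for $i$. These comparison maps are compatible with the units and counits of the relevant adjunctions, so that the composite $\pi_! i_! i^\ast F \to \pi_! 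F \to \pi_!^{\Mon} F$ agrees with the composite $\pi_! i_! i^\ast F \to \pi_!^{\Mon} i_!^{\Mon} i^\ast F \xrightarrow{\pi_!^{\Mon}\varepsilon} \pi_!^{\Mon} F$.

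Now I would combine the two inputs. By (i) the map $\pi_! i_! i^\ast F \to \pi_! F$ is surjective, so the image of the composite $\pi_! i_! i^\ast F \to \pi_!^{\Mon} F$ coincides with the image of $\pi_! F \to \pi_!^{\Mon} F$, which generates $\pi_!^{\Mon} F$ by the first part of Lemma~\ref{lem:mon-surj}. By the factorization in the previous paragraph this generating set is contained in the image of $\pi_!^{\Mon}\varepsilon$. Since the image of a homomorphism of integral monoids is a submonoid, and this submonoid contains a generating set of $\pi_!^{\Mon} F$, it must be all of $\pi_!^{\Mon} F$; hence $\pi_!^{\Mon}\varepsilon : \pi_!^{\Mon} i_!^{\Mon} i^\ast F \to \pi_!^{\Mon} F$ is surjective. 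I expect the main obstacle to be purely bookkeeping: verifying that the set-level and monoid-level comparison maps are compatible with the counit so that the above square commutes. Once that compatibility is in place, the surjectivity follows formally from (i) and the generation statement, with no further geometric input.
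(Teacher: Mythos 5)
Your proposal is correct, and its overall architecture is the paper's: part (i) by base change to an algebraically closed field, the identification $\pi_! G = \pi_0(G^{\et})$ from Lemma~\ref{lem:pi_0}, and a density/connectedness argument; part (ii) deduced formally from (i) via Lemma~\ref{lem:mon-surj}. Two remarks. First, in (i) your step ``the image of $C$ in $\undernorm X$ is open'' needs $C$ itself to be open in $F^{\et}$, which is not a consequence of openness of \'etale maps alone: it is exactly here that the paper invokes local connectedness (since $\undernorm X$, and hence $F^{\et}$, is locally of finite type over a field, its connected components are open). This is a one-line patch, but your closing claim that the only content is ``\'etale maps are open plus density'' omits it. Second, your part (ii) takes a genuinely different, though formally equivalent, route from the paper's. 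The paper introduces the free monoid $\mathbf{N}F \rightarrow F$, uses the identification $\pi_!^{\Mon}(\mathbf{N}G) \simeq \mathbf{N}\pi_!(G)$ (and its analogue for $i$) to convert part (i) into a surjection $\pi_!^{\Mon} i_!^{\Mon} i^\ast \mathbf{N}F \rightarrow \pi_!^{\Mon}\mathbf{N}F$, and then applies the \emph{second} part of Lemma~\ref{lem:mon-surj} (preservation of surjections) to the arrow $\pi_!^{\Mon}\mathbf{N}F \rightarrow \pi_!^{\Mon}F$ in a commutative square. You instead use the comparison maps $\pi_! G \rightarrow \pi_!^{\Mon} G$ and the \emph{first} part of Lemma~\ref{lem:mon-surj} (generation), concluding because a subsheaf of monoids containing a generating family is everything. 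The two arguments are adjoint presentations of the same idea---indeed the paper's proof of the second part of Lemma~\ref{lem:mon-surj} is itself a generation argument, which you have effectively inlined---and each rests on a formal compatibility: yours on the counit/mate compatibility of the comparison maps (which does hold, by naturality and the triangle identities, exactly the ``bookkeeping'' you flag), the paper's on the commutativity of its displayed diagram, which it asserts without verification. Your version buys the avoidance of $\mathbf{N}F$ altogether; the paper's makes the commutative diagram arguably more transparent since its horizontal arrows are built from isomorphisms of free monoids.
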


\begin{proof}
We begin with the statement about sheaves of sets.  Since $\pi_!$ commutes with base change and surjectivity can be checked on the stalks, it is sufficient to assume that $\undernorm S$ is the spectrum of an algebraically closed field.  By Lemma~\ref{lem:pi_0}, we have:
\begin{gather*}
\pi_! F = \pi_0(F^{\et}) \\
\pi_! i_! i^\ast F = \pi_0(i^{-1} F^{\et})
\end{gather*}
But $\undernorm X$ is locally connected (since it is locally of finite type over a field) and $i^{-1} F^{\et} \subset F^{\et}$ is a dense open subset, so $\pi_0(i^{-1} F^{\et})$ surjects onto $\pi_0(F^{\et})$.

Now we prove the statement about monoids.  As before, let $\mathbf N F$ be the sheaf of monoids freely generated by the underlying sheaf of sets of $F$; note that $\mathbf N F \rightarrow F$ is surjective.  Consider the commutative diagram:
\begin{equation*} \vcenter{\xymatrix{
\pi^{\Mon}_! i^{\Mon}_! i^\ast \mathbf N F \ar@{-}[r]^<>(0.5){\sim} \ar[d] & \mathbf N \pi_! i_! i^\ast F \ar[r] & \mathbf N \pi_! F \ar@{-}[r]^<>(0.5){\sim} & \pi^{\Mon}_! \mathbf N F \ar[d] \\
\pi^{\Mon}_! i^{\Mon}_! i^\ast F \ar[rrr] & & & \pi^{\Mon}_! F
}} \end{equation*}
The upper arrow is surjective by the first part of the lemma and Lemma~\ref{lem:mon-surj} implies that the right arrow is surjective, so the bottom arrow is surjective as well.
\end{proof}

\section{Construction of minimal monoids}
\label{sec:summary}

We recall the notation and main construction of \cite{minimal}.  Recall our assumptions from Section~\ref{sec:reduction}, to which we add that $\pi$ is flat with reduced geometric fibers and $\pi^\ast M_S \rightarrow M_X$ is integral.  We also specify a type,~$u$.

We think of fibered categories over $\LogSch$ as fibered categories over $\Sch$ via the projection $\LogSch \rightarrow \Sch$.  Thus, if $H$ is a fibered category and $\undernorm S$ is a scheme, the notation $H(\undernorm S)$ refers to the category of all pairs $(M_S, \xi)$ where $M_S \in \LogSch(\undernorm S)$ is a logarithmic structure on $S$ and $\xi \in H(\undernorm S, M_S)$.

Define $H = \Hom_{\LogSch/S}(M, M_X)$.  This is the fibered category over $\LogSch/S$ whose fiber over $f : (T,M_T)  \rightarrow (S,M_S)$ is the set of morphisms of logarithmic structures
\begin{equation*}
f^\ast M \rightarrow f^\ast M_X
\end{equation*}
on $\undernorm X \mathop{\times}_{\undernorm S} \undernorm T$ that commute with the structural maps from $f^\ast \pi^\ast M_S$ and such that the induced map
\begin{equation*}
f^\ast M^{\rm gp} / f^\ast \pi^\ast M_S^{\rm gp} \rightarrow f^\ast M_X^{\rm gp} / f^\ast \pi^\ast M_S^{\rm gp}
\end{equation*}
coincides with the pullback $f^\ast u$ of the type $u$.  Note that $\undernorm X \mathop\times_{\undernorm S} \undernorm T$ is the underlying scheme of $X \mathop{\times}_S T$ because $X$ is integral over $S$.

We will often wish to refer to objects of the moduli space $H$ with some additional fixed structure.  For example, to fix a logarithmic base scheme $T$, we write $H(T)$; to fix the underlying scheme $\undernorm T$ of $T$ and its characteristic monoid $\overnorm N_T$ (but not the logarithmic structure $N_T$), we write $H(\undernorm T, \overnorm N_T)$.

It was shown in \cite[Theorem~1.1]{minimal} that if $\undernorm X$ is proper over $\undernorm S$ then $H$ is representable by an algebraic space $\undernorm{H}$ with a logarithmic structure $M_H$, that is locally of finite presentation over $\undernorm{S}$.  By a theorem of Gillam, $\undernorm H(\undernorm S) \subset H(\undernorm S)$ may be characterized as the subcategory of \emph{minimal} objects (see \cite{Gillam} or \cite[Appendix~B]{minimal}).  This category is equivalent to a set by \cite[Corollary~5.1.1]{minimal}, which shows minimal objects have no nontrivial automorphisms.

We will be particularly interested in the $\undernorm S$-points of $H$, so we introduce some additional notation for handling them.  For any morphism of logarithmic structures $M_S \rightarrow N_S$, we write $M_X \rightarrow N_X$ for morphism of logarithmic structures on $X$ obtained by pushout along the morphism $\pi^\ast M_S \rightarrow M_X$:  
\begin{equation} \label{eqn:14} \vcenter{\xymatrix{
\pi^\ast M_S \ar[r] \ar[d] & M_X \ar[d] \\
\pi^\ast N_S \ar[r] & N_X
}} \end{equation}
In other words, $(\undernorm X, N_X) = X \mathop{\times}_S (\undernorm S, N_S)$.

Then $H(\undernorm S)$ is the opposite of the category of pairs $(M_S \rightarrow N_S, M \xrightarrow{\varphi} N_X)$ where $N_X$ is as above and and $M \rightarrow N_X$ is a morphism of logarithmic structures commuting with the maps from $\pi^\ast M_S$.  Such an object is generally abbreviated to $(N_S, \varphi)$.  

We now recall the construction of the logarithmic structure of $H$ from \cite{minimal}.  This can be done without explicit reference to the underlying space of $H$:  given an $\undernorm S$-point $(N_S, \varphi)$ of $H$, there is a corresponding map $\alpha : \undernorm S \rightarrow \undernorm H$; there must therefore be a logarithmic structure $\alpha^\ast M_H$ on $\undernorm S$ and a homomorphism of logarithmic structures $\alpha^\ast M_H \rightarrow N_S$; we build the logarithmic structure $\alpha^\ast M_H$.  This is known as the \emph{minimal} (or \emph{basic}) logarithmic structure associated to $(N_S, \varphi)$.

In fact, what we do is find a sheaf of monoids (a \emph{quasilogarithmic structure}, in the language of \cite[Definition~1.2]{minimal}) $\overnorm Q_S$ on $\undernorm S$ such that $\alpha^\ast \overnorm M_H$ is a quotient of $\overnorm Q_S$.  We recall the construction of $\overnorm Q_S$.  

First we form a fiber product:
\begin{equation} \label{eqn:9}
\overnorm R^{\rm gp}_0 = \overnorm M^{\rm gp} \mathop\times_{M_{X/S}^{\rm gp}} \overnorm M_X^{\rm gp}
\end{equation}
The first map in the product is the type $u$ and the second is the tautological projection.  The fiber product comes with two inclusions of $\pi^\ast \overnorm M_S^{\rm gp}$, one from each factor, and we take $\overnorm R^{\rm gp}$ to be their coequalizer.  Then we define the submonoid $\overnorm R \subset \overnorm R^{\rm gp}$ to be the smallest submonoid containing $\pi^\ast \overnorm M_S$ such that the pushout $\overnorm R_X$ in diagram~\eqref{eqn:7}
\begin{equation} \label{eqn:7} \vcenter{ \xymatrix{
\pi^\ast \overnorm M_S \ar[r] \ar[d] & M_X \ar[d] \\
\overnorm R \ar[r] & \overnorm R_X
}} \end{equation}
contains the image of $\overnorm M$ under the tautological map (see \cite[Section~3.1]{minimal} for the construction of this map).  Next, we consider the pushout:
\begin{equation} \label{eqn:8} \vcenter{ \xymatrix{
\pi_! \pi^\ast \overnorm M_S \ar[r] \ar[d] & \pi_! \overnorm R \ar[d] \\
\overnorm M_S \ar[r] & \overnorm Q_S
}} \end{equation}
It was shown in \cite[Section~4.2]{minimal} that if $(N_S, \varphi) \in H(S)$ then there is a unique homomorphism of sheaves of monoids, commuting with maps from $\overnorm M_S$ and inducing $\overnorm\varphi : \overnorm M \rightarrow \overnorm N_X$:
\begin{equation} \label{eqn:12}
\overnorm Q_S \rightarrow \overnorm N_S
\end{equation}
Pulling back the projection $N_S \rightarrow \overnorm N_S$ and the map $N_S \rightarrow \mathcal O_S$ induces a sheaf of monoids $Q_S$ on $\undernorm S$ and a homomorphism $Q_S \rightarrow \mathcal O_S$.  This is not always a logarithmic structure on $S$ (see, for example, the case of logarithmic points in Section~\ref{sec:example}), so we pass to the associated logarithmic structure to get $\alpha^\ast M_H$.

We record several basic consequences of the construction:
\begin{lemma} \label{lem:obs}
\begin{enumerate}[label=(\roman{*})]
\item \label{eqn:2}
$\displaystyle \overnorm Q_S^{\rm gp} / \overnorm M_S^{\rm gp} \simeq \pi_! \overnorm R^{\rm gp} / \pi_! \pi^\ast \overnorm M_S^{\rm gp} \simeq \pi_! (\overnorm R^{\rm gp} / \pi^\ast \overnorm M_S^{\rm gp})$
\item \label{eqn:1}
$\displaystyle \overnorm{R}^{\rm gp} / \pi^\ast \overnorm{M}^{\rm gp}_S \simeq \overnorm{R}_0^{\rm gp} \big/ (\pi^\ast \overnorm M_S^{\rm gp} \times \pi^\ast \overnorm{M}^{\rm gp}_S) \simeq \overnorm{M}^{\rm gp} / \pi^\ast \overnorm{M}_S^{\rm gp}$
\item \label{eqn:10}
The sharpening of $\overnorm Q_S \rightarrow \overnorm N_S$ is $\alpha^\ast \overnorm H_S$.
\item \label{eqn:11}
If $M_{X/S} = 0$ then $\overnorm R = \overnorm M$, canonically.
\end{enumerate}
\end{lemma}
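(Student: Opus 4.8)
The plan is to dispatch the four assertions one at a time, as each follows formally from \eqref{eqn:9}, \eqref{eqn:7}, and \eqref{eqn:8} together with the exactness properties of $\pi_!$. For \ref{eqn:2} I would apply groupification to the pushout square \eqref{eqn:8}; since groupification is a left adjoint it preserves pushouts, so $\overnorm Q_S^{\rm gp}$ is the pushout of $\overnorm M_S^{\rm gp} \leftarrow \pi_!\pi^\ast\overnorm M_S^{\rm gp}\to\pi_!\overnorm R^{\rm gp}$ in abelian groups, and reading off the cokernel of the left-hand leg gives the first isomorphism. The point needing care is that $\pi_!$ applied to a sheaf of abelian groups must be read as the left adjoint of $\pi^\ast$ on abelian groups; because $\pi^\ast$ commutes both with the inclusion of groups into monoids and with groupification, the monoidal and abelian left adjoints are compatible, $(\pi_!^{\Mon}F)^{\rm gp}\simeq\pi_!(F^{\rm gp})$, which is exactly what licenses commuting $\mathrm{gp}$ past $\pi_!$. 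The second isomorphism is then right-exactness of the left adjoint $\pi_!$ applied to the presentation $\pi^\ast\overnorm M_S^{\rm gp}\to\overnorm R^{\rm gp}\to\overnorm R^{\rm gp}/\pi^\ast\overnorm M_S^{\rm gp}\to 0$.

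For \ref{eqn:1} I would work entirely with sheaves of abelian groups. By construction $\overnorm R^{\rm gp}$ is the coequalizer of the two inclusions of $\pi^\ast\overnorm M_S^{\rm gp}$ into the two factors of \eqref{eqn:9}, hence the quotient of $\overnorm R_0^{\rm gp}$ by the image of the difference of those inclusions; I would check that this difference together with a single copy of $\pi^\ast\overnorm M_S^{\rm gp}$ generates the subgroup $\pi^\ast\overnorm M_S^{\rm gp}\times\pi^\ast\overnorm M_S^{\rm gp}$, which yields the first isomorphism. For the second, the projection of \eqref{eqn:9} onto the factor $\overnorm M^{\rm gp}$ is surjective with kernel the second copy of the image of $\pi^\ast\overnorm M_S^{\rm gp}$ inside $\overnorm M_X^{\rm gp}$, since $\overnorm M_X^{\rm gp}\to M_{X/S}^{\rm gp}$ has exactly that kernel; quotienting further by the first copy then identifies the result with $\overnorm M^{\rm gp}/\pi^\ast\overnorm M_S^{\rm gp}$. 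This is pure diagram-chasing and should present no difficulty.

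For \ref{eqn:10} I would unwind the construction recalled after \eqref{eqn:12}: $\alpha^\ast M_H$ is the logarithmic structure associated to the prelogarithmic structure $Q_S\to\mathcal O_S$ obtained by pulling \eqref{eqn:12} back along $N_S\to\overnorm N_S$. The characteristic of an associated logarithmic structure is the quotient of the prelogarithmic monoid by the preimage of $\mathcal O_S^\times$; an element of $Q_S=\overnorm Q_S\times_{\overnorm N_S}N_S$ lies over a unit exactly when its $N_S$-component is a unit, that is, when its $\overnorm Q_S$-component maps to $0$ in the sharp monoid $\overnorm N_S$, so the quotient is precisely the sharpening of \eqref{eqn:12}. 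For \ref{eqn:11} the hypothesis $M_{X/S}=0$ collapses the fiber product \eqref{eqn:9} and makes the upper map of \eqref{eqn:7} an isomorphism onto $M_X$, so the pushout gives $\overnorm R_X\simeq\overnorm R$ and the tautological map becomes a homomorphism $\overnorm M\to\overnorm R$; minimality of $\overnorm R$ then forces it to be generated by $\pi^\ast\overnorm M_S$ and the image of $\overnorm M$, and \ref{eqn:1} shows that this comparison is an isomorphism after passing to $\mathrm{gp}$, which upgrades to the canonical identification $\overnorm R=\overnorm M$.

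I expect the only genuine friction to be in \ref{eqn:10}, where I must pin down the precise meaning of the \emph{sharpening} of \eqref{eqn:12} and match it to the associated-logarithmic-structure construction of \cite{minimal}, since both $Q_S$ and the notion of sharpening are defined there. The group-theoretic computations \ref{eqn:2} and \ref{eqn:1} are formal once the compatibility of $\pi_!$ with groupification is in hand, and \ref{eqn:11} is essentially a degeneration of \ref{eqn:1}.
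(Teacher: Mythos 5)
Your treatments of parts \ref{eqn:2}, \ref{eqn:1}, and \ref{eqn:10} are correct and essentially the same as the paper's: for \ref{eqn:2} the paper likewise passes the pushout \eqref{eqn:8} to associated groups and uses that $\pi_!$ is a left adjoint (your explicit compatibility $(\pi_!^{\Mon}F)^{\rm gp}\simeq\pi_!(F^{\rm gp})$ is left implicit in the paper, and it is the right justification); for \ref{eqn:1} the paper performs the same two quotients of $\overnorm R_0^{\rm gp}$, by the antidiagonal and then by the right copy of $\pi^\ast\overnorm M_S^{\rm gp}$; and for \ref{eqn:10} the paper's cocartesian-square argument, dividing the square defining the associated logarithmic structure by $\mathcal O_S^\ast$ after noting $\exp^{-1}\mathcal O_S^\ast=\gamma^{-1}N_S^\ast$, is your element-level argument phrased diagrammatically.

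Part \ref{eqn:11}, however, has a genuine gap at the last step. You correctly observe that when $M_{X/S}=0$ the pushout \eqref{eqn:7} collapses, so that minimality forces $\overnorm R$ to be the submonoid of $\overnorm R^{\rm gp}$ generated by $\pi^\ast\overnorm M_S$ and the image $\tau(\overnorm M)$ of the tautological map $\tau$. But you then invoke \ref{eqn:1} to say ``this comparison is an isomorphism after passing to $\mathrm{gp}$,'' and that this ``upgrades'' to $\overnorm R=\overnorm M$. Neither inference is licensed. The isomorphism of \ref{eqn:1} is built from the projection of the fiber product \eqref{eqn:9}, not from $\tau$; it identifies two quotient groups abstractly but says nothing about where $\tau$ actually sends $\overnorm M$ inside $\overnorm R^{\rm gp}\simeq\overnorm M^{\rm gp}$, so a priori $\tau(\overnorm M)$ could be a skewed submonoid and the monoid it generates together with $\pi^\ast\overnorm M_S$ need not be $\overnorm M$. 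Moreover, even if $\tau^{\rm gp}$ were known to be an isomorphism, a group-level isomorphism does not upgrade to a monoid isomorphism (consider $\mathbf N\hookrightarrow\mathbf Z$); one needs, in addition, that $\tau$ restricted to $\pi^\ast\overnorm M_S$ agrees with the structural inclusion (so that $\overnorm R=\tau(\overnorm M)$ and $\tau:\overnorm M\rightarrow\overnorm R$ is surjective) together with injectivity of $\tau$, to conclude bijectivity. All of that information resides in the definition of the tautological map itself, which is precisely the input the paper supplies and you omit: the paper either chases the construction of $\tau$ in \cite[Section~3.1]{minimal} to see that under the identification $\overnorm R^{\rm gp}=\overnorm M^{\rm gp}$ it is the identity, or (in its footnote) verifies that $(\overnorm M,\id)$ satisfies the universal property characterizing $(\overnorm R,\tau)$ when $M_{X/S}=0$. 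Your argument needs one of these two inputs; \ref{eqn:1} cannot substitute for it.
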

\begin{proof}
Passage to the associated group is a left adjoint and therefore preserves cocartesian diagrams.  Therefore diagram~\eqref{eqn:8} remains cocartesian upon passage to the associated groups.  It is then an easy exercise with universal properties to verify that the quotients along the horizontal direction are isomorphic.  This gives the isomorphism on the left side of~\ref{eqn:2}; for the right side, we observe that $\pi_!$ is a left adjoint, hence respects quotients.

For~\ref{eqn:1}, observe that to go from the middle term to the left one, we quotient both $\overnorm R_0^{\rm gp}$ and $\pi^\ast \overnorm M_S^{\rm gp} \times \pi^\ast \overnorm M_S^{\rm gp}$ by the antidiagonal copy of $\pi^\ast \overnorm M_S^{\rm gp}$.  To go from middle to right, we use the fiber product construction of $\overnorm R_0^{\rm gp}$ in~\eqref{eqn:9} and quotient by the right copy of $\pi^\ast \overnorm M_S^{\rm gp}$.

For~\ref{eqn:10}, first recall that the sharpening of $\overnorm Q_S \rightarrow \overnorm N_S$ is the minimal quotient $\overnorm Q'_S$ of $\overnorm Q_S$ through which the map to $\overnorm N_S$ factors as a sharp homomorphism.  It is constructed by dividing $\overnorm Q_S$ by the set of elements that map to $0$ in $\overnorm N_S$.

Now, $Q_S$ is, by construction, an extension of $\overnorm Q_S$ by $\mathcal O_S^\ast$.  By definition, the associated logarithmic structure $Q_S^a$ of $Q_S$ fits into a cocartesian diagram:
\begin{equation*} \xymatrix{
\exp^{-1} \mathcal O_S^\ast \ar[r] \ar[d] & \mathcal O_S^\ast \ar[d] \\
Q_S \ar[r] & Q_S^a
} \end{equation*}
But note that $\exp^{-1} \mathcal O_S^\ast = \gamma^{-1} N_S^\ast$, where $\gamma : Q_S \rightarrow N_S$ is the tautological map.  Dividing everything by $\mathcal O_S^\ast$ yields another cocartesian diagram:
\begin{equation*} \xymatrix{
\overnorm\gamma^{-1}(0) \ar[r] \ar[d] & 0 \ar[d] \\
\overnorm Q_S \ar[r] & \overnorm Q_S^a
} \end{equation*}
On the other hand, this is precisely the cocartesian diagram used to sharpen $\overnorm Q_S \rightarrow \overnorm N_S$.

For~\ref{eqn:11}, observe that when $M_{X/S} = 0$, the fiber product~\eqref{eqn:9} becomes a product:  $\overnorm R_0^{\rm gp} = \overnorm M^{\rm gp} \times \pi^\ast \overnorm M_S^{\rm gp}$, so equalizing the two copies of $\pi^\ast \overnorm M_S^{\rm gp}$ recovers $\overnorm M^{\rm gp}$.  Chasing the definitions\footnote{The reader who is so inclined may prefer to observe that $\overnorm R$ and $\overnorm M \rightarrow \overnorm R_X$ (where $\overnorm R_X$ is the pushout of $\pi^\ast \overnorm M_S \rightarrow \overnorm M_X$ along $\pi^\ast \overnorm M_S \rightarrow \overnorm R$) satisfy a universal property, and that $\overnorm M$ and $\id : \overnorm M \rightarrow \overnorm M$ visibly satisfy this universal property when $M_{X/S} = 0$.} in \cite[Section~3.1]{minimal}, one discovers that the map $\overnorm M^{\rm gp} \rightarrow \overnorm R_X = \overnorm M^{\rm gp}$ is the identity under this identification, and therefore that $\overnorm R = \overnorm M$.
\end{proof}


\section{A counterexample:  logarithmic points}
\label{sec:example}

It is perhaps easiest to appreciate how the criterion of Theorem~\ref{thm:main} works by studying why quasifiniteness fails in an example where the criterion does not apply.

We consider the moduli space of logarithmic points valued in the standard logarithmic point, considered by Abramovich, Chen, Gillam, and Marcus~\cite{Gillam,ACGM}:  Let $k$ be the spectrum of an algebraically closed field, let $\undernorm S = \Spec k$ and $M_S = 0$, and let $X$ and $Y$ both be the standard logarithmic point over $k$, both regarded as logarithmic schemes over $S$.  The space we are interested in is $\Hom_{\LogSch/S}(X,Y)$.  Since the underlying map of schemes $\undernorm X \rightarrow \undernorm Y$ must be the identity, this may be identified with $\Hom_{\LogSch/S}(M,M_X)$.

We have $M = M_X = \mathbf{N} \times k^\ast$.  An $S$-point of this moduli space is simply a map of logarithmic structures $M \rightarrow M_X$, and our choices are determined by where the generator $(1,1)$ goes.  That gives $\mathbf{N} \times k^\ast$ for the $S$-points.

We could also look at the construction of the minimal monoid associated to one of these maps.  Following the algorithm from Section~\ref{sec:summary}, we should form $\overnorm R^{\rm gp} = \overnorm M^{\rm gp} \mathop{\times}_{M_{X/S}} \overnorm M_{X}^{\rm gp}$.  Since $\overnorm M_S$ is trivial, this is just $\overnorm M^{\rm gp}$.  (There is an additional quotient by $\overnorm M_S^{\rm gp}$ in the algorithm which doesn't change anything since $\overnorm M_S = 0$.)  Then we identify the smallest submonoid $\overnorm R \subset \overnorm R^{\rm gp}$ such that $\overnorm R_X = \overnorm M \times \mathbf{N}$ contains the image of $\overnorm M$ under the tautological map $(\id, u) : \overnorm M \rightarrow \overnorm M^{\rm gp} \times \mathbf{Z}$ (here $u$ is the type, also known as the contact order).  This submonoid is obviously $\overnorm M$ itself, so $\overnorm M$ is the minimal characteristic monoid.

In order to obtain an actual logarithmic structure, we need to assume that $u$ came from an actual logarithmic map over some $(\undernorm S, N_S)$.  This induces a map $\overnorm M \rightarrow \overnorm N_S$ since $\overnorm M$ is minimal on the level of characteristic monoids (see Section~\ref{sec:summary}).  This gives a \emph{quasilogarithmic structure} (see \cite[Definition~1.2]{minimal}) by pulling back $N_S$ to $\overnorm M$, but in order to get a genuine logarithmic structure, we must also sharpen the map $\overnorm M \rightarrow \overnorm N_S$; that is, we must divide $\overnorm M$ by the preimage of $0$.

In our example, we are working over $(S, \mathcal O_S^\ast)$ so that $\overnorm N_S = 0$.  Therefore the sharpening of the minimal quasilogarithmic structure is trivial.

The main observation of this paper was that the sharpening process is responsible for the failure of quasifiniteness.  More precisely, when the sharpening process does not change anything, there is at most one choice (up to unique isomorphism) of a minimal object (at least if the domain is proper).  Indeed, it is a consequence of Corollary~\ref{cor:characteristic} and Lemma~\ref{lem:quotient}, below, that (under an assumption of properness) the automorphism group of the minimal quasilogarithmic structure $Q_S$, as an extension of its characteristic monoid by $\mathcal O_S^\ast$---\emph{but not respecting the map to $\mathcal O_S$}---precisely cancels the choices of maps $M \rightarrow Q_X$.  This automorphism group agrees with the automorphism group of the minimal logarithmic structure exactly when $Q_S$ is the minimal logarithmic structure, and this occurs with the map $\pi_! \overnorm R \rightarrow \overnorm N_S$ is sharp (Lemma~\ref{lem:obs}~\ref{eqn:10}).

Thus the analysis of the fiber of the moduli space of logarithmic maps comes down to the question of whether the minimal quasilogarithmic structure $Q_S$ is already a logarithmic structure.  Lemma~\ref{lem:sharp} shows that a sufficient condition is that the relative characteristic monoid $M_{X/S}$ be generically trivial.

\section{Minimal characteristic monoids}
\label{sec:characteristic}

\begin{lemma} \label{lem:sharp}
Let $\overnorm R$ be constructed as in Section~\ref{sec:summary}.  If the relative characteristic of $X/S$ is generically trivial on every geometric fiber of $X$ over $S$ then for any $(N_S, \varphi) \in H(\undernorm S)$,\footnote{Recall that by our notational conventions, $H(\undernorm S)$ consists of all choices of logarithmic structure $N_S$ on $\undernorm S$ and all $\varphi \in H(\undernorm S, N_S)$.} the canonical map $\pi_! \overnorm R \rightarrow \overnorm N_S$ is sharp.
\end{lemma}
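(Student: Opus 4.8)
The plan is to reduce to a geometric point of $\undernorm S$ and then exploit the fact that the characteristic of a morphism of logarithmic structures is always sharp, transporting this property from the dense open locus where the relative characteristic vanishes. First I would reduce to the case where $\undernorm S$ is the spectrum of an algebraically closed field: sharpness of the canonical map $\pi_!\overnorm R \to \overnorm N_S$ is the triviality of the preimage of $0$, which may be checked on stalks, and $\pi_!$ commutes with base change by \cite[Corollary~4.5.1]{minimal}. Over such a point, let $i : \undernorm U \hookrightarrow \undernorm X$ be the locus on which $M_{X/S}$ vanishes; this is open, and dense by hypothesis. By Lemma~\ref{lem:obs}~\ref{eqn:11} we have $\overnorm R|_{\undernorm U} = \overnorm M|_{\undernorm U}$ canonically, and since $\overnorm M_X|_{\undernorm U} = \pi^\ast \overnorm M_S|_{\undernorm U}$ the pushout $\overnorm N_X$ restricts to $\pi^\ast \overnorm N_S$ over $\undernorm U$. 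Tracing through the construction of Section~\ref{sec:summary}, the adjoint $\overnorm R \to \pi^\ast \overnorm N_S$ of the canonical map then restricts on $\undernorm U$ to the characteristic $\overnorm\varphi|_{\undernorm U} : \overnorm M|_{\undernorm U} \to \pi^\ast \overnorm N_S|_{\undernorm U}$ of the logarithmic morphism $\varphi$.

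The key observation is that $\overnorm\varphi$ is sharp. Indeed, for a morphism of logarithmic structures $\varphi : M \to N_X$, a section $\overnorm m$ lies in $\overnorm\varphi^{-1}(0)$ if and only if $\varphi(m)$ is a unit, if and only if $\beta_{N_X}(\varphi(m)) = \beta_M(m)$ is a unit, if and only if $m$ is a unit, that is, if and only if $\overnorm m = 0$. Hence $\overnorm\varphi|_{\undernorm U}$ is sharp, so a section of $\overnorm M|_{\undernorm U}$ maps to $0$ precisely when it lies in the zero section. On the target side, because $\overnorm N_S$ is sharp the preimage $\psi^{-1}(0)$ under $\psi : \pi_!\overnorm R \to \overnorm N_S$ is a face of $\pi_!\overnorm R$; consequently it suffices to show that no nonzero element of a generating set of $\pi_!\overnorm R$ lies in $\psi^{-1}(0)$, since a face meeting a generating set only in $0$ is trivial.

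It remains to produce such a generating set and verify the condition. By Lemma~\ref{lem:gen-gen} the homomorphism $\pi_! i_! i^\ast \overnorm R \to \pi_!\overnorm R$ is surjective, so by Lemma~\ref{lem:mon-surj}~\ref{lem:mon-surj:1} together with Lemma~\ref{lem:pi_0} the monoid $\pi_!\overnorm R$ is generated by the classes $[C]$ of the connected components $C$ of the espace \'etal\'e of $\overnorm R|_{\undernorm U} = \overnorm M|_{\undernorm U}$. Since $C$ is connected and $\overnorm N_S$ is discrete, $\psi([C])$ is the constant value of $\overnorm\varphi|_{\undernorm U}$ along $C$; by sharpness of $\overnorm\varphi|_{\undernorm U}$ this value is $0$ exactly when $C$ lies in the zero section, and a component lying in the zero section represents $0$ in $\pi_!\overnorm R$ by the universal property of $\pi_!$, since every homomorphism out of $\overnorm R$ carries the zero section to $0$. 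Thus every nonzero generator has nonzero image under $\psi$, and $\psi$ is sharp.

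The step I expect to be the main obstacle is the precise identification, via the construction recalled in Section~\ref{sec:summary} and carried out in \cite{minimal}, that the adjoint of the canonical map $\pi_!\overnorm R \to \overnorm N_S$ really does restrict on $\undernorm U$ to $\overnorm\varphi|_{\undernorm U}$; once this compatibility is pinned down the remaining steps are formal. A secondary subtlety is the bookkeeping of the identity element of $\pi_!\overnorm R$ when the geometric fibers $\undernorm X_s$ are disconnected, which the universal-property argument for the zero section is designed to handle uniformly.
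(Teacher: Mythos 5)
Your proposal is correct and follows essentially the same route as the paper's proof: reduce to an algebraically closed base point via base-change compatibility of $\pi_!$, use Lemma~\ref{lem:gen-gen} together with Lemmas~\ref{lem:pi_0} and~\ref{lem:mon-surj}~\ref{lem:mon-surj:1} to generate $\pi_!\overnorm R$ by classes of connected components over the dense open $\undernorm U$ where $M_{X/S}=0$, identify the adjoint map there with $\overnorm\varphi$ via Lemma~\ref{lem:obs}~\ref{eqn:11}, and conclude from sharpness of $\overnorm N_S$ and of $\overnorm\varphi$. Your face argument is just a repackaging of the paper's observation that a sum mapping to $0$ in a sharp monoid forces each summand to map to $0$, and the compatibility you flag as the main obstacle is exactly what the paper asserts (without further proof) from the construction in \cite[Section~4.2]{minimal}.
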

\begin{proof}
Since the construction of $\pi_! \overnorm R$ commutes with change of base, we can reduce to the case where $\overnorm S$ is the spectrum of an algebraically closed field.  By Lemma~\ref{lem:gen-gen}, there is a surjection $\pi_! i_! i^\ast \overnorm R \rightarrow \pi_! \overnorm R$, where $i$ is the inclusion of the dense open subset where $M_{X/S} = 0$.  It is sufficient to show that $\pi_! i_! i^\ast \overnorm R \rightarrow \overnorm N_S$ is sharp.  We can therefore reduce to the case where $M_{X/S} = 0$, globally.

In that case, $\overnorm R = \overnorm M$ (Lemma~\ref{lem:obs}~\ref{eqn:11}) and the map $\pi_! \overnorm R \rightarrow \overnorm N_S$ is induced by adjunction from the map:
\begin{equation*}
\overnorm \varphi : \overnorm M \rightarrow \overnorm N_X = \pi^\ast \overnorm N_S
\end{equation*}
The equality on the right holds because $M_{X/S} = 0$.  But now by Lemma~\ref{lem:pi_0} and Lemma~\ref{lem:mon-surj}~\ref{lem:mon-surj:1}, any $a \in \pi_! \overnorm M$ is the image of a sum of local sections $a_i$ defined over connected $\undernorm U_i$ that are \'etale over $\undernorm X$.  If $a = \sum a_i$ maps to $0$ in $\overnorm N_S$ then all $a_i$ map to $0$ in $\overnorm N_S$ since $\overnorm N_S$ is sharp (as it is the characteristic monoid of a logarithmic structure).  This means $a_i$ maps to $0$ under $\overnorm\varphi : \overnorm M \rightarrow \pi^\ast \overnorm N_S$.  But $\overnorm\varphi$ underlies the morphism of logarithmic structures $\varphi : M \rightarrow \pi^\ast N_S$, hence is sharp.  Therefore all of the $a_i$ must be zero.
\end{proof}

\begin{corollary} \label{cor:characteristic}
Under the hypotheses of the lemma, if $(N_S, \varphi) \in \undernorm H(\undernorm S)$ then $\overnorm N_S = \overnorm Q_S$.
\end{corollary}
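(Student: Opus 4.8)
The plan is to reduce the statement to the single assertion that the canonical homomorphism $\overnorm Q_S \rightarrow \overnorm N_S$ of \eqref{eqn:12} is already \emph{sharp}, and then to extract that sharpness from the pushout presentation \eqref{eqn:8} of $\overnorm Q_S$ together with Lemma~\ref{lem:sharp}. First I would unwind what minimality of $(N_S,\varphi)$ buys us: since $(N_S,\varphi) \in \undernorm H(\undernorm S)$, the logarithmic structure $N_S$ is the minimal one, so the comparison map $\alpha^\ast M_H \rightarrow N_S$ is an isomorphism and in particular $\overnorm N_S = \alpha^\ast \overnorm H_S$. By Lemma~\ref{lem:obs}~\ref{eqn:10} the monoid $\alpha^\ast \overnorm H_S$ is the sharpening of $\overnorm Q_S \rightarrow \overnorm N_S$, that is, the quotient of $\overnorm Q_S$ by the submonoid of sections that map to $0$ in $\overnorm N_S$. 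Hence it suffices to show that this submonoid is trivial: if $\overnorm Q_S \rightarrow \overnorm N_S$ is sharp, its sharpening is $\overnorm Q_S$ itself, and the chain $\overnorm N_S = \alpha^\ast \overnorm H_S = \overnorm Q_S$ closes up.

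To prove sharpness I would exploit the cocartesian square \eqref{eqn:8}, which presents $\overnorm Q_S$ as the pushout of $\overnorm M_S \leftarrow \pi_! \pi^\ast \overnorm M_S \rightarrow \pi_! \overnorm R$; consequently every local section of $\overnorm Q_S$ is a sum of the images of a section of $\overnorm M_S$ and a section of $\pi_! \overnorm R$. Restricted to these two factors, the homomorphism $\overnorm Q_S \rightarrow \overnorm N_S$ is, on one side, the characteristic map of the logarithmic structure homomorphism $M_S \rightarrow N_S$, which is sharp because a morphism of logarithmic structures is local (it reflects units, so $\overnorm m \mapsto 0$ forces $\overnorm m = 0$); and, on the other side, the map $\pi_! \overnorm R \rightarrow \overnorm N_S$, which is sharp by Lemma~\ref{lem:sharp}, whose generic-triviality hypothesis is exactly the one in force here. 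Since $\overnorm N_S$ is the characteristic monoid of a logarithmic structure it is sharp, so a sum of two sections vanishes in $\overnorm N_S$ only if each summand does; combining this with the sharpness of the two restrictions forces any section of $\overnorm Q_S$ mapping to $0$ to be itself $0$. This is the required sharpness of $\overnorm Q_S \rightarrow \overnorm N_S$.

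The substantive content all sits in Lemma~\ref{lem:sharp}, which has already been established; the corollary is then essentially formal. The one step that deserves care is the passage from sharpness of the two legs of the pushout to sharpness of the whole, and the feature that makes it go through is that the target $\overnorm N_S$ is sharp rather than merely integral — which holds precisely because it is a characteristic monoid. I do not anticipate any genuine obstacle beyond this bookkeeping: once the reduction to sharpness of $\overnorm Q_S \rightarrow \overnorm N_S$ is in place, the remaining argument is a short universal-property chase on the pushout.
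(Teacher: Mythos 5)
Your proof is correct, and its skeleton---reducing via Lemma~\ref{lem:obs}~\ref{eqn:10} to showing that $\overnorm Q_S \rightarrow \overnorm N_S$ is sharp, and then feeding in Lemma~\ref{lem:sharp}---is the same as the paper's; the difference lies in how sharpness of $\overnorm Q_S \rightarrow \overnorm N_S$ is extracted from sharpness of $\pi_! \overnorm R \rightarrow \overnorm N_S$. The paper does this by proving that $\pi_! \overnorm R \rightarrow \overnorm Q_S$ is \emph{surjective}: it first notes the case of connected geometric fibers (where $\pi_! \overnorm R = \overnorm Q_S$), then in general reduces fiber by fiber to an algebraically closed field, treats $\undernorm X = \varnothing$ separately, and otherwise uses surjectivity of $\pi_! \pi^\ast \overnorm M_S \rightarrow \overnorm M_S$, which the pushout~\eqref{eqn:8} converts into surjectivity of $\pi_! \overnorm R \rightarrow \overnorm Q_S$; sharpness of a composite together with surjectivity of the first map then gives sharpness of the second. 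You instead use the pushout presentation directly: every section of $\overnorm Q_S$ is (stalkwise) a sum of an image from $\overnorm M_S$ and an image from $\pi_! \overnorm R$; the leg $\overnorm M_S \rightarrow \overnorm N_S$ is sharp because a morphism of logarithmic structures reflects units (and the map~\eqref{eqn:12} commutes with the maps from $\overnorm M_S$ by construction, so the restriction really is that characteristic map); the leg $\pi_! \overnorm R \rightarrow \overnorm N_S$ is sharp by Lemma~\ref{lem:sharp}; and sharpness of $\overnorm N_S$ splits any vanishing sum into vanishing summands. Your route buys uniformity---no reduction to geometric fibers and no case analysis on emptiness or connectedness---at the cost of the extra (easy) input that $\overnorm M_S \rightarrow \overnorm N_S$ is sharp, which the paper's surjectivity argument avoids needing because, when $\undernorm X \neq \varnothing$, the image of $\overnorm M_S$ in $\overnorm Q_S$ is already absorbed into the image of $\pi_! \overnorm R$. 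The only point deserving an explicit sentence in your write-up is the decomposition claim itself: the pushout~\eqref{eqn:8} is taken in sheaves of integral monoids, so one should say that generation by the two legs holds at the level of stalks (where sharpness is checked anyway) and that integralization, being a quotient, does not disturb it.
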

\begin{proof}
By Lemma~\ref{lem:obs}~\ref{eqn:10}, $\overnorm N_S$ is the sharpening of $\overnorm Q_S \rightarrow \overnorm N_S$, so the point is to show $\overnorm Q_S \rightarrow \overnorm N_S$ is sharp.  We already know that $\pi_! \overnorm R \rightarrow \overnorm N_S$ is sharp by the lemma.  When $\undernorm X$ has connected geometric fibers over $\undernorm S$, this implies the corollary, since in that case $\pi_! \overnorm R = \overnorm Q_S$.

In general, we can proceed geometric fiber by geometric fiber and assume $\undernorm S$ is the spectrum of an algebraically closed field.  If $\undernorm X = \varnothing$, the conclusion is obvious.  Otherwise, the map $\pi_! \pi^\ast \overnorm M_S \rightarrow \overnorm M_S$ is surjective, which implies that $\pi_! \overnorm R \rightarrow \overnorm Q_S$ is surjective as well.  Therefore the sharpness of $\pi_! \overnorm R \rightarrow \overnorm N_S$ implies the sharpness of $\overnorm Q_S \rightarrow \overnorm N_S$, as required.
\end{proof}

The corollary implies that all objects of $\undernorm H(\undernorm S)$ have the same characteristic monoid.  This greatly simplifies the analysis of $\undernorm H(\undernorm S)$, since the following lemma gives a complete characterization of $H(\undernorm S, \overnorm N_S)$ for any fixed characteristic monoid $\overnorm N_S$:

\begin{lemma} \label{lem:quotient}
Assume $\undernorm S$ is the spectrum of an algebraically closed field.  If $\undernorm H(\undernorm S, \overnorm N_S)$ is nonempty then it is isomorphic to the quotient groupoid\footnote{In fact, the groupoid is a $2$-group and $H(\undernorm S, \overnorm N_S)$ is a pseudotorsor under this $2$-group.}
\begin{equation*}
\bigl[ \Hom(\overnorm{M}^{\rm gp} / \pi^\ast \overnorm{M}^{\rm gp}_S, \mathcal{O}_X^\ast) \big/ \Hom(\overnorm{N}_S^{\rm gp} / \overnorm{M}^{\rm gp}_S, \mathcal{O}_S^\ast) \bigr].
\end{equation*}
\end{lemma}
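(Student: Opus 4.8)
The plan is to trivialize the groupoid against a chosen base object and to separate the data of a point of $\undernorm H(\undernorm S, \overnorm N_S)$ into the logarithmic structure $N_S$ and the morphism $\varphi\colon M\to N_X$, showing that once $N_S$ is fixed the morphisms form a torsor under $A:=\Hom(\overnorm M^{\rm gp}/\pi^\ast \overnorm M_S^{\rm gp}, \mathcal O_X^\ast)$, while the remaining ambiguity in $N_S$ contributes the automorphism group $B:=\Hom(\overnorm N_S^{\rm gp}/\overnorm M_S^{\rm gp}, \mathcal O_S^\ast)$. Since $\undernorm H(\undernorm S, \overnorm N_S)$ is assumed nonempty, I would fix an object $(N_S,\varphi_0)$ and record that, by the construction of the minimal logarithmic structure in Section~\ref{sec:summary}, the characteristic morphism $\overnorm\varphi\colon \overnorm M\to \overnorm N_X$ of $\varphi_0$ is the tautological one supplied by that construction; it is therefore common to every object of the groupoid.

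First I would reduce to a fixed logarithmic structure. Because $\undernorm S=\Spec k$ with $k$ algebraically closed, $\mathcal O_S^\ast=k^\ast$ is divisible, hence injective as an abelian group, so $\Ext^1(-,\mathcal O_S^\ast)$ vanishes on finitely generated abelian groups. This has two consequences: any two fine logarithmic structures on $\undernorm S$ with characteristic $\overnorm N_S$ receiving the fixed map from $M_S$ are isomorphic under $M_S$; and the group of automorphisms of $N_S$ under $M_S$ inducing the identity on $\overnorm N_S$ is precisely $B$, since such an automorphism has the form $n\mapsto \lambda(\overnorm{n})\,n$ for a unique character $\lambda$ of $\overnorm N_S^{\rm gp}$, and fixing the image of $M_S$ forces $\lambda$ to vanish on $\overnorm M_S^{\rm gp}$. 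Consequently the full subcategory of $\undernorm H(\undernorm S,\overnorm N_S)$ spanned by objects whose logarithmic structure is the chosen $N_S$ is connected, has automorphism objects governed by $B$, and is equivalent to the whole groupoid, so it suffices to analyze it.

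Next I would identify the objects over the fixed $N_S$ with an $A$-torsor. Given two lifts $\varphi,\varphi'$ of the common characteristic $\overnorm\varphi$, each local section $m$ of $M$ satisfies $\varphi'(m)=c(m)\varphi(m)$ for a unique $c(m)\in\mathcal O_X^\ast$; since both are morphisms of logarithmic structures the resulting character $c$ is trivial on $\mathcal O_X^\ast$, and since both commute with the structural map it is trivial on $\pi^\ast M_S^{\rm gp}$, so $c\in A$. Conversely any $c\in A$ carries $\varphi$ to another valid lift, so the set of lifts is a torsor under $A$, trivialized by $\varphi_0$. Finally I would compute the action of $B$: an automorphism $\lambda\in B$ of $N_S$ induces, on the pushout $N_X=\pi^\ast N_S\oplus_{\pi^\ast M_S} M_X$, an automorphism whose character $\mu_\lambda\colon \overnorm N_X^{\rm gp}\to\mathcal O_X^\ast$ restricts to $\pi^\ast\lambda$ on $\pi^\ast\overnorm N_S^{\rm gp}$ and is trivial on $\overnorm M_X^{\rm gp}$ (these are compatible exactly because $\lambda$ kills $\overnorm M_S^{\rm gp}$), and it moves $\varphi$ by the element $\mu_\lambda\circ\overnorm\varphi^{\rm gp}\in A$. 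This defines the homomorphism $B\to A$, and assembling the $A$-torsor of objects with the $B$-action through $B\to A$ exhibits $\undernorm H(\undernorm S,\overnorm N_S)$ as the action groupoid $[A/B]$. As a consistency check, injectivity of $B\to A$ is exactly the statement that minimal objects have no nontrivial automorphisms \cite[Corollary~5.1.1]{minimal}.

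The step I expect to be most delicate is establishing that the characteristic morphism $\overnorm\varphi$ is genuinely common to all objects, since fixing only the characteristic monoid $\overnorm N_S$ leaves room a priori for several characteristic morphisms $\overnorm M\to\overnorm N_X$; here one must invoke the rigidity of the minimal construction (and, in the intended application, the fact from Corollary~\ref{cor:characteristic} that $\overnorm N_S=\overnorm Q_S$, so that $\overnorm\varphi$ is the tautological map $\overnorm M\to\overnorm Q_X$). Once $\overnorm\varphi$ is pinned down, the passage to a fixed $N_S$ rests only on the injectivity of $k^\ast$, and the identification of the objects with an $A$-torsor together with the computation of the $B$-action is then routine bookkeeping with characters.
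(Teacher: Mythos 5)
Your proof follows the same route as the paper's: holding $N_S$ fixed, the lifts $\varphi$ of the common characteristic morphism form a torsor under $\Hom(\overnorm M^{\rm gp}/\pi^\ast\overnorm M_S^{\rm gp},\mathcal O_X^\ast)$; algebraic closedness of the base field makes $N_S$ unique up to (nonunique) isomorphism once $\overnorm N_S$ is fixed; and the residual automorphisms of $N_S$ fixing $M_S$ and $\overnorm N_S$, namely $\Hom(\overnorm N_S^{\rm gp}/\overnorm M_S^{\rm gp},\mathcal O_S^\ast)$, produce exactly the quotient groupoid. If anything you are more careful than the paper, which asserts the torsor property and the identification of the automorphism group without your explicit character computations, and which does not flag the point you rightly single out as delicate --- that minimality (via Corollary~\ref{cor:characteristic}) is what pins down the characteristic morphism $\overnorm\varphi$ shared by all objects.
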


The map $\Hom(\overnorm N_S / \overnorm M_S, \mathcal O_S^\ast) \rightarrow \Hom(\overnorm M / \pi^\ast \overnorm M_S, \mathcal O_X^\ast)$
used to construct the quotient is obtained from the canonical maps
\begin{gather*}
\pi^\ast \mathcal O_S^\ast \rightarrow \mathcal O_X^\ast \\
\overnorm M^{\rm gp} / \pi^\ast \overnorm M_S^{\rm gp} \rightarrow \overnorm N_X^{\rm gp} / \overnorm M_X^{\rm gp} \simeq \pi^\ast \overnorm N_S^{\rm gp} / \pi^\ast \overnorm M_S^{\rm gp} ,
\end{gather*}
the latter of which is induced from $\overnorm \varphi$ and the cocartesian diagram~\eqref{eqn:14}.

\begin{proof}
We need to see how many ways there are to choose $(N_S, \varphi) \in H_u(\undernorm S)$ with the same fixed characteristic monoid $\overnorm N_S$.  Holding $N_S$ fixed, any two choices of $\varphi$ will differ by a uniquely determined homomorphism $\overnorm{M} \rightarrow \mathcal{O}_X^\ast$ that vanishes on $\pi^\ast \overnorm{M}_S$.  Therefore, for $N_S$ fixed, the choices of $\varphi$ form a torsor under $\Hom(\overnorm{M} / \pi^\ast \overnorm{M}_S, \mathcal{O}_X^\ast)$.

Since $S$ is the spectrum of an algebraically closed field, there is a unique choice of $N_S$ (up to nonunique isomorphism) once $\overnorm{N}_S$ is fixed.  Making such a choice, we can now identify $H_u(\undernorm S, \overnorm N_S)$ with the quotient of $\Hom(\overnorm{M} / \pi^\ast \overnorm{M}_S, \mathcal O_X^\ast)$ by the automorphism group of $N_S$ fixing $\overnorm{N}_S$ and $M_S$.  That automorphism group is precisely $\Hom(\overnorm{N}_S / \overnorm{M}_S, \mathcal O_S^\ast)$, by the same calculation we made above.
\end{proof}

Putting these two lemmas together, we discover first from Lemma~\ref{lem:sharp} that if $H(\undernorm S) \neq \varnothing$ then the characteristic monoid of any object of $\undernorm H(\undernorm S)$ is $\undernorm Q_S$.  Then Lemma~\ref{lem:quotient} implies that $\undernorm H(\undernorm S)$ may be identified with the quotient of $\Hom(\overnorm M^{\rm gp} / \pi^\ast \overnorm M_S^{\rm gp}, \mathcal O_X^\ast)$ by
\begin{align*}
\Hom(\overnorm Q_S^{\rm gp} / \overnorm M_S^{\rm gp}, \mathcal O_S^\ast)  
& = \Hom(\pi_! \overnorm R^{\rm gp} / \pi_! \pi^\ast \overnorm M_S^{\rm gp}, \mathcal O_S^\ast) & & \text{(Lemma~\ref{lem:obs}~\ref{eqn:2})} \\
& = \Hom(\overnorm R^{\rm gp} / \pi^\ast \overnorm M_S^{\rm gp}, \pi^\ast \mathcal O_S^\ast) \\
& = \Hom(\overnorm M^{\rm gp} / \pi^\ast \overnorm M_S^{\rm gp}, \pi^\ast \mathcal O_S^\ast)  & & \text{(Lemma~\ref{lem:obs}~\ref{eqn:1}).}
\end{align*}

We will therefore be able to conclude that the quotient $\undernorm H(\undernorm S)$ is trivial once we prove
\begin{equation*}
\Hom(\overnorm M^{\rm gp} / \pi^\ast \overnorm M_S^{\rm gp}, \mathcal O_X^\ast) = \Hom(\overnorm M^{\rm gp} / \pi^\ast \overnorm M_S^{\rm gp}, \pi^\ast \mathcal O_S^\ast) .
\end{equation*}
That will be done in the next section.

\section{Units}
\label{sec:quasilog}

\begin{theorem} \label{thm:units}
Suppose that $X$ is reduced, that $X$ is proper over $S$, and that $M$ is a coherent logarithmic structure on $X$.  Then any homomorphism $\overnorm M \rightarrow \mathcal O_X^\ast$ factors through $\pi^\ast \mathcal O_S^\ast$.
\end{theorem}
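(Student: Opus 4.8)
The plan is to reduce the statement to the generic points of $\undernorm X$ and there to exploit the interplay between the properness of $\undernorm X$, its reducedness, and the surjectivity of the cospecialization maps of the characteristic monoid. Since $\mathcal O_X^\ast$ is a sheaf of groups, to say that $h : \overnorm M \to \mathcal O_X^\ast$ factors through $\pi^\ast \mathcal O_S^\ast$ is to say that the composite $\overnorm M \to \mathcal O_X^\ast \to \mathcal O_X^\ast / \pi^\ast \mathcal O_S^\ast$ vanishes, which may be checked on stalks. Because $\undernorm X$ is reduced, a local unit is determined by its restrictions to the generic points of the components of $\undernorm X$ passing through a given point, so it is enough to show that for every generic point $\eta$ of $\undernorm X$, lying over $t \in \undernorm S$, the induced homomorphism $\overnorm M_\eta \to \kappa(\eta)^\ast$ takes values in the image of $\mathcal O_{\undernorm S, t}^\ast$. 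The heart of the matter is the case $\undernorm S = \Spec k$ for an algebraically closed field $k$ (the case actually needed in Theorem~\ref{thm:qf}): there $\pi^\ast \mathcal O_S^\ast$ is the constant sheaf $\underline{k^\ast}$, and the claim is that every section in the image of $h$ is a global constant.

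First I would dispose of the easy case: if a local section $m$ of $\overnorm M$ vanishes on a fiberwise dense open subset, then $h(m)$ equals $1$ there, hence everywhere by reducedness. The genuine difficulty is a section that is nonzero at a generic point $\eta$, for then $h(m)$ need not be generically trivial. To treat it, fix such an $\eta$, let $X_i = \overline{\{\eta\}}$ be the corresponding (proper, reduced, connected) component, and fix $p \in \overnorm M_\eta$; write $f = h_\eta(p) \in \kappa(\eta)^\ast$, a priori only a rational function on $X_i$. The key step is to show that $f$ is in fact a global regular unit on $X_i$. At an arbitrary point $y$ of $X_i$ the cospecialization map $\overnorm M_y \to \overnorm M_\eta$ is surjective (a quotient by a face, as for any fine logarithmic structure), so $p$ lifts to some $\tilde p \in \overnorm M_y$; then $h_y(\tilde p)$ is a unit in $\mathcal O_{X_i, y}^\ast$ whose germ at $\eta$ is $f$, because $h$ is a morphism of sheaves and so commutes with generization. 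As $X_i$ is reduced, this forces $f$ to be regular and nonvanishing at $y$. Thus $f \in \Gamma(X_i, \mathcal O^\ast)$, and since $X_i$ is proper, reduced, and connected over $k = \bar k$ we have $\Gamma(X_i, \mathcal O_{X_i}) = k$, so $f \in k^\ast$ is constant.

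Over a field this completes the argument, since it shows the image of $h$ lands in the constants. For a general base I would deduce the generic-point statement by applying the field case to the geometric fibers of $\undernorm X$ over the generic points of the images of the components of $\undernorm X$: this shows each $h_\eta(p)$ is constant along the fibers, hence lies in the image of $\mathcal O_{\undernorm S,t}^\ast$. Reducedness of $\undernorm X$ then recovers $h$ from its values at the generic points, while properness, in the form $\pi_\ast \mathcal O_X^\ast = \mathcal O_S^\ast$ for reduced connected geometric fibers, guarantees that these fiberwise constants are themselves pulled back from a single section over $\undernorm S$; together these give the desired factorization through $\pi^\ast \mathcal O_S^\ast$.

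I expect the main obstacle to be exactly the generically nonzero sections, where the naive vanishing argument breaks down. The device that resolves it, extending the rational unit $f$ across the boundary of $X_i$ by lifting $p$ along the cospecialization maps, relies essentially on two inputs used together: the surjectivity of those maps for fine characteristic monoids, so that $p$ always admits a local lift and $f$ is forced to be regular; and the properness of the component, so that a global regular unit must be constant. A secondary technical point, in the relative setting, is checking that the fiberwise constants produced by the field case descend compatibly to a single section over $\undernorm S$; this is where reducedness and $\pi_\ast \mathcal O_X^\ast = \mathcal O_S^\ast$ are combined.
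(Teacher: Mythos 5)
Your argument for the case $\undernorm S = \Spec k$ with $k$ algebraically closed is essentially correct, and it is a genuinely different mechanism from the paper's: where the paper introduces the subsheaf $\mathscr U \subset \mathcal O_X^\ast$ of units integral along every valuation (Lemma~\ref{lem:U}, which uses coherence via charts) and then compactifies the graph of a unit inside $X \times \mathbf{P}^1$ (Lemma~\ref{lem:U-proper}, which uses properness), you use surjectivity of the cospecialization maps of $\overnorm M$ --- also a consequence of coherence, since all stalks over the domain of a chart are quotients of the chart monoid --- to extend the rational unit $f = h_\eta(p)$ regularly over the whole component, and then properness of the component to force constancy. This is more elementary (no valuation rings, no compactification), but one repair is needed even over a field: $\overnorm M$ and $\mathcal O_X^\ast$ are \'etale sheaves, so $p$ and $f$ are really defined only on an \'etale neighborhood $V \rightarrow X$, whose components need not be proper; you should run the argument on the normalization of $X_i$ in the finite separable extension of $\kappa(\eta)$ where $f$ lives (still proper, being finite over $X_i$). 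This non-properness of \'etale neighborhoods is exactly what the paper's valuative definition of $\mathscr U$ and the closure $\overnorm V \subset X \times \mathbf{P}^1$ are designed to circumvent: only $\Spec K$ is required to map to $V$, while $\Spec R$, respectively $\overnorm V$, maps to the proper scheme $X$, respectively $X \times \mathbf{P}^1$.

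The genuine gap is the relative case. From the field case applied to a geometric fiber $X_{\bar t}$ you can only conclude that $h_\eta(p)$ becomes constant in $\bar\kappa(t)^\ast$ after pullback to the geometric fiber, i.e.\ that it is algebraic over $\kappa(t)$; this is strictly weaker than lying in the image of $\mathcal O_{S,t}^\ast$, and the passage from one to the other is not a formality --- it is precisely where geometric reducedness enters (for $X = \Spec k(a^{1/p})$ over $S = \Spec k$ with $k$ imperfect, which is reduced, proper, with split log structure $\overnorm M = \mathbf{N}$, the unit $a^{1/p}$ is constant on the geometric fiber yet is not a section of $\pi^\ast \mathcal O_S^\ast$, since \'etale locally sections of $\pi^\ast\mathcal O_S^\ast$ only reach separable extensions of $k$). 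Moreover, two hypotheses you invoke are not available: applying the field case to $X_{\bar t}$ requires the geometric fiber to be reduced, which does not follow from reducedness of $X$; and the identity $\pi_\ast \mathcal O_X^\ast = \mathcal O_S^\ast$ requires connected, geometrically reduced fibers --- it fails for finite covers, where ``constant along the fibers'' is in any case vacuous and so yields nothing. Finally, even granting fiberwise constants, gluing the elements of the various $\mathcal O_{S,t_\eta}^\ast$ attached to different components into a single section of $\pi^\ast\mathcal O_S^\ast$ is itself a nontrivial step that your last paragraph gestures at but does not carry out. The paper needs none of this because properness is used only through closedness of $\overnorm V$ in $X \times \mathbf{P}^1$, and the relative statement is extracted from the single claim $\Gamma(\overnorm V, \mathcal O_{\overnorm V}^\ast) = \Gamma(\overnorm V, \pi^\ast \mathcal O_S^\ast)$ for that one reduced proper $S$-scheme. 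In summary: your proposal does prove the case actually used in Theorem~\ref{thm:qf}, where $\undernorm S$ is the spectrum of an algebraically closed field, but it does not prove Theorem~\ref{thm:units} in the stated relative generality.
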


\begin{remark}
	Theorem~\ref{thm:units} is obvious in the special case when $\overnorm M^{\rm gp}$ is generated by global sections, since maps $\overnorm M \rightarrow \mathcal O_X^\ast$ correspond to global sections of $\mathcal O_X^\ast$.
\end{remark}

For any scheme $X$, let $\mathscr U \subset \mathcal O_X^\ast$ be the \'etale subsheaf\footnote{I do not know whether it is necessary to work in the \'etale topology, or if the sheaf is induced from the Zariski topology.} whose sections over an \'etale $V \rightarrow X$ consist of all $f \in \mathcal O_X^\ast(V)$ such that, for every valuation ring $R$ with field of fractions $K$, and every commutative diagram
\begin{equation} \label{eqn:3} \vcenter{ \xymatrix{
			\Spec K \ar[r]^<>(0.5)\varphi \ar[d] & V \ar[d] \\
			\Spec R \ar[r] & X
}} \end{equation}
the restriction $\varphi^\ast f$ of $f$ to $\Spec K$ lies in $R^\ast$.  The idea is that $\mathscr U(V)$ is the sheaf of units in $\mathscr O_X^\ast(V)$ that have no zeroes or poles on the closure of~$V$.

\begin{lemma} \label{lem:U}
Suppose that $M$ is a coherent logarithmic structure on $X$.  Then any homomorphism of sheaves of monoids $\overnorm{M} \rightarrow \mathcal O_X^\ast$ factors through $\mathscr U$.
\end{lemma}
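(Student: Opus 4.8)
The plan is to check directly that the image $f=\psi(\overnorm m)$ of a local section $\overnorm m$ of $\overnorm M$ over some \'etale $V\to X$ satisfies the valuative condition defining $\mathscr U$. So fix a valuation ring $R$ with fraction field $K$ and a diagram as in~\eqref{eqn:3}; writing $g:\Spec R\to X$ for the lower horizontal arrow, I must show that $\varphi^\ast f\in R^\ast$. The idea is to pull the whole problem back to $\Spec R$, where it becomes a comparison of the stalks of the characteristic monoid at the generic and closed points. Concretely, I would form the pullback logarithmic structure $g^\ast M$, whose characteristic is $g^{-1}\overnorm M=:\overnorm M_R$, and let $\psi_R:\overnorm M_R\to\mathcal O_{\Spec R}^\ast$ be the homomorphism induced by $\psi$ together with the structure map $g^{-1}\mathcal O_X^\ast\to\mathcal O_{\Spec R}^\ast$.

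Next I would pass to geometric points $\bar\eta\rightsquigarrow\bar s$ lying over the generic and closed points of $\Spec R$; there the \'etale local rings are a separable closure $K^{\sh}$ of $K$ and the strict henselization $R^{\sh}$, so the generization map of $\mathcal O^\ast$ is the inclusion $(R^{\sh})^\ast\hookrightarrow(K^{\sh})^\ast$. Since $\varphi$ realizes $\Spec K$ as the generic point over $X$, the section $\overnorm m$ determines an element $\overnorm m_{\bar\eta}$ of the stalk $\overnorm M_{\bar\eta}$, and naturality of $\psi$ identifies $\psi_{\bar\eta}(\overnorm m_{\bar\eta})$ with the image of $\varphi^\ast f$ in $(K^{\sh})^\ast$. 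The crux is then to lift $\overnorm m_{\bar\eta}$ along the generization map
\[
\overnorm M_{\bar s}\longrightarrow\overnorm M_{\bar\eta},
\]
which is surjective precisely because $M$ is coherent: near $\bar s$ a finitely generated chart $P$ surjects onto both stalks, and the generic stalk is the quotient of the special one by the face of sections that become units at $\bar\eta$. Choosing a lift $\widetilde m\in\overnorm M_{\bar s}$ and applying $\psi_{\bar s}$, the commuting square
\[
\xymatrix{
\overnorm M_{\bar s} \ar[r]^{\psi_{\bar s}} \ar[d] & (R^{\sh})^\ast \ar[d] \\
\overnorm M_{\bar\eta} \ar[r]^{\psi_{\bar\eta}} & (K^{\sh})^\ast
}
\]
exhibits $\varphi^\ast f$ as the image of the \emph{unit} $\psi_{\bar s}(\widetilde m)\in(R^{\sh})^\ast$. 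Since $R^{\sh}$ is again a valuation ring whose valuation restricts to that of $R$, so that $(R^{\sh})^\ast\cap K=R^\ast$, this forces $\varphi^\ast f\in R^\ast$, as desired.

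The step I expect to be the main obstacle---and the only place coherence of $M$ is genuinely used---is the surjectivity of the generization map $\overnorm M_{\bar s}\to\overnorm M_{\bar\eta}$: it is exactly this that lets a section visible at the generic point be extended over the closed point, so that the corresponding unit acquires neither a zero nor a pole. The remaining work is bookkeeping with the strict henselization, for which I would only need that the strict henselization of a valuation ring is a valuation ring inducing the same valuation on $K$.
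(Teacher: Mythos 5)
Your proof is correct and takes essentially the same route as the paper's: both pull the problem back to $\Spec R$ and use coherence of $M$ to lift the section at the generic point across the closed point, so that its image under the homomorphism is a unit with neither zero nor pole. The only difference is bookkeeping: the paper phrases the lifting via an \'etale-local chart together with the fact that $R^\ast \subset K^\ast$ is an inclusion of \'etale sheaves on $\Spec R$, while you phrase it stalkwise as surjectivity of the generization map $\overnorm{M}_{\bar s} \rightarrow \overnorm{M}_{\bar\eta}$ together with the identity $(R^{\sh})^\ast \cap K^\ast = R^\ast$ for the strict henselization of a valuation ring.
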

\begin{proof}
	Let $\alpha : \overnorm M \rightarrow \mathcal O_X^\ast$ be a homomorphism.  Consider a section $f \in \overnorm{M}(V)$ for some \'etale $V \rightarrow X$.  We must show that for any commutative diagram~\eqref{eqn:3}, the image of $\varphi^\ast \alpha$ in $K^\ast$ lies in $R^\ast$.  We can therefore replace $X$ with $\Spec R$ and $V$ with $\Spec K$.  Since $K^\ast$ and $R^\ast$ are sheaves in the \'etale topology on $\Spec R$ (namely, $\mathcal O_X^\ast$ and $j_\ast \mathcal O_{\Spec K}^\ast$ where $j$ is the inclusion of the generic point), we can work \'etale-locally in $X$ and assume that $M$ has a global chart.  Then $\alpha$ is the restriction of a global section $\beta$ of $\overnorm M$ and the image of $\beta$ in $\mathcal O_X^\ast$ lies in $\Gamma(X, \mathcal O_X^\ast) = R^\ast$.  Therefore the same holds for $\alpha$.
\end{proof}

\begin{lemma} \label{lem:U-proper}
When $\pi : X \rightarrow S$ is proper and $X$ is reduced, the natural inclusion $\pi^\ast \mathcal O_S^\ast \subset \mathscr U$ is a bijection.
\end{lemma}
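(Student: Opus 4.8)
The inclusion $\pi^\ast \mathcal O_S^\ast \subseteq \mathscr U$ is immediate from the definitions: a section that is \'etale-locally of the form $\pi^\ast g$ with $g$ a unit on $S$ pulls back, along any diagram~\eqref{eqn:3}, through the composite $\Spec R \to X \to S$ to a unit of $R$, so its restriction to $\Spec K$ lies in $R^\ast$. The real content is the reverse inclusion $\mathscr U \subseteq \pi^\ast \mathcal O_S^\ast$, and my plan is to verify it on geometric stalks. Checking this inclusion of \'etale subsheaves of $\mathcal O_X^\ast$ at a geometric point $\overline x$ of $X$ lying over $\overline s$ of $S$ lets me replace $S$ by the strictly henselian local ring $\mathcal O_{S,\overline s}^{\sh}$; then $\pi^\ast \mathcal O_S^\ast$ has stalk $(\mathcal O_{S,\overline s}^{\sh})^\ast$, and I must show that every germ of $\mathscr U$ at $\overline x$ already lies in this subgroup of $(\mathcal O_{X,\overline x}^{\sh})^\ast$.

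The main device is to convert the intrinsic valuative condition defining $\mathscr U$ into a statement over $S$ by means of the valuative criterion of properness for $\pi$. Represent a germ $f \in \mathscr U_{\overline x}$ by a section over an \'etale neighborhood $V$, and fix a branch of the reduced local ring $\mathcal O_{X,\overline x}^{\sh}$, i.e.\ a minimal prime, with fraction field $K$; since $V \to X$ is \'etale this branch is the image of a generic point of $V$, which supplies a map $\Spec K \to V$. For any valuation ring $R$ of $K$ containing the image of $\mathcal O_{S,\overline s}^{\sh}$, the valuative criterion of properness produces the lift $\Spec R \to X$ completing diagram~\eqref{eqn:3}, and the defining property of $\mathscr U$ then forces $f|_K \in R^\ast$. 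As $R$ ranges over all such valuation rings their intersection is the integral closure of $\mathcal O_{S,\overline s}^{\sh}$ in $K$, so both $f$ and $f^{-1}$ are integral over $\mathcal O_{S,\overline s}^{\sh}$ along every branch.

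The geometric engine behind this is that a geometrically connected, reduced, proper scheme over a field $k$ satisfies $H^0(\mathcal O) = k$, so its only global units are constants; the condition defining $\mathscr U$ is exactly what guarantees that a section over an \'etale $V$ extends to such a global unit, since it extends to a unit on the normalization of the closure of its support, where algebraic Hartogs applies (the normalization being normal and proper). To organize the branchwise integrality globally I would use properness once more: since $\pi$ is proper, flat, and has reduced geometric fibers, its Stein factorization $S' \to S$ is finite \'etale, and over the strictly henselian base $X$ breaks into pieces on which $\pi_\ast \mathcal O_X = \mathcal O_S$ with connected geometric fibers. On such a piece $\Gamma(X,\mathcal O_X) = \Gamma(S,\mathcal O_S)$, and combined with the branchwise integrality this identifies the germ $f$ with the pullback of a unit of $\mathcal O_{S,\overline s}^{\sh}$.

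I expect the genuine obstacle to be precisely this descent step: upgrading the branchwise (equivalently, fibrewise) integrality over $\mathcal O_{S,\overline s}^{\sh}$ to an honest pullback from $S$, rather than merely a unit of some integral extension. This is where the reducedness of the geometric fibers is indispensable. Dropping it, even a finite morphism can fail the conclusion: over a discrete valuation ring $R$ with uniformizer $t$, the reduced, proper (finite) scheme $\Spec R[\sqrt t]$ carries the unit $1+\sqrt t$, which lies in $\mathscr U$ yet is not pulled back from $R$, exactly because its special fiber is nonreduced and the Stein factorization is ramified rather than \'etale. Ensuring that properness and reduced geometric fibers together rule out such behavior --- so that the integral unit along each branch has nowhere to go but the base --- is the crux of the argument.
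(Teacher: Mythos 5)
Your reduction to geometric stalks and your use of the valuative criterion are sound as far as they go: for a germ $f \in \mathscr U_{\overline x}$ and each branch of $\mathcal O_{X,\overline x}^{\sh}$ with fraction field $K$, properness does produce, for every valuation ring $R \subset K$ containing the image of $A = \mathcal O_{S,\overline s}^{\sh}$, a diagram~\eqref{eqn:3}, whence $f|_K \in R^\ast$; intersecting over all such $R$ shows that $f$ and $f^{-1}$ are integral over $A$ along every branch. The gap is everything after that, and it is not an implementation detail: the descent step you yourself call ``the crux'' carries the entire content of the lemma, and the tools you propose for it do not apply. Stein factorization and the identity $\Gamma(X,\mathcal O_X)=\Gamma(S,\mathcal O_S)$ concern \emph{global} sections of $\mathcal O_X$, whereas $f$ is only a section over an \'etale neighborhood $V \rightarrow X$; nothing in your argument converts $f$ into a global function. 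Your other suggestion --- extend $f$ to a unit on the normalization of the closure of its support and apply algebraic Hartogs --- fails at the first step: a section over an \'etale cover $V \rightarrow U$, where $U$ is the open image of $V$, need not descend to $U$, so $f$ defines no rational function on the closure of $U$ in $X$, and there is nothing to normalize. Worse, your own example $\Spec R[\sqrt t\,] \rightarrow \Spec R$ shows that what you actually proved (integrality of $f$ and $f^{-1}$ over $A$ along every branch) is strictly weaker than the conclusion: there $f = 1+\sqrt t$ and $f^{-1}$ are integral over $R$ and $f \in \mathscr U$, yet $f$ is not a pullback. So the one step you left unproved is exactly the one your partial results cannot reach.

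The paper's proof supplies precisely the missing device. Given $f \in \mathscr U(V)$, it forms the closure $\overnorm V$ of the graph of $f$ in $X \times \mathbf{P}^1$, with its reduced structure. This is the point your sketch never reaches: $\overnorm V$ is a \emph{reduced, proper} $S$-scheme in which (the graph of) $V$ is dense, carrying simultaneously a map to $X$ and an extension of $f$ to a morphism $\overnorm f : \overnorm V \rightarrow \mathbf P^1$. Applying the defining property of $\mathscr U$ to valuation rings centered at the points $q \in \overnorm V$ and with generic point in $V$ shows $\overnorm f(q) \neq 0,\infty$ for every $q$, so $\overnorm f$ factors through $\mathbf G_m$, i.e.\ $\overnorm f \in \Gamma(\overnorm V, \mathcal O_{\overnorm V}^\ast)$; only now does the global statement you wanted to invoke become available, giving $\overnorm f \in \Gamma(\overnorm V, \pi^\ast \mathcal O_S^\ast)$ and, after restriction along $V \rightarrow \overnorm V$, the conclusion $f \in \Gamma(V, \pi^\ast\mathcal O_S^\ast)$. (Your closing observation does have merit: over a base that is not a geometric point, ``reduced and proper'' alone does not force global units to be pullbacks --- your $\Spec R[\sqrt t\,]$ example shows this --- and one needs either the paper's standing hypothesis of reduced geometric fibers or, as in the application to Theorem~\ref{thm:qf}, that $\undernorm S$ is the spectrum of an algebraically closed field. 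But identifying the needed hypothesis is not a substitute for the graph-closure construction that turns the \'etale-local section into a global unit in the first place.)
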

\begin{proof}
	Suppose that $f$ is a section of $\mathscr U$ over an \'etale $V \rightarrow X$.  We can assume $V$ is quasicompact.  Let $\overnorm{V}$ be the closure of the graph of $f$ in $X \times \mathbf{P}^1$ (with its reduced structure) and let $\overnorm f : \overnorm V \rightarrow \mathbf P^1$ be the projection.  For any point $q$ of $\overnorm V$, we can choose a valuation of $\mathcal O_{\overnorm V}$ whose center is $q$ and whose generic point lies in $V$; let $R$ be the valuation ring.  Then by definition of $\mathscr U$, the restriction of $\overnorm f$ to $\Spec R$ lies in $R^\ast$.  In particular, $\overnorm f(q) \neq 0, \infty$.  Therefore $\overnorm f$ factors through $\mathbf{G}_m \subset \mathbf{P}^1$.  This holds for any $q \in \overnorm V$, so $\overnorm f \in \Gamma(\overnorm V, \mathcal O_{\overnorm V}^\ast)$.  But $\overnorm V$ is reduced and proper over $S$, so $\Gamma(\overnorm V, \mathcal O_{\overnorm V}^\ast) = \Gamma(\overnorm V, \pi^\ast \mathcal O_S^\ast)$.  But $V$ is reduced (since it is \'etale over $X$ and $X$ is reduced) so $V \subset \overnorm V$ as a scheme and $f$ is the restriction of $\overnorm f$ to $V$.  Thus $f \in \Gamma(V, \pi^\ast \mathcal O_S^\ast)$, as required.
\end{proof}

\begin{proof}[Proof of Theorem~\ref{thm:units}]
	By Lemma~\ref{lem:U}, any homomorphism $\overnorm M \rightarrow \mathcal O_X^\ast$ factors through $\mathscr U$.  But because $X$ is reduced and proper over $S$, Lemma~\ref{lem:U-proper} implies $\mathscr U = \pi^\ast \mathcal O_S^\ast$.
\end{proof}

\begin{proof}[Proof of Theorem~\ref{thm:qf}]
By Corollary~\ref{cor:characteristic}, any $(N_S, \varphi) \in \undernorm H(\undernorm S)$ must have $\undernorm N_S = \undernorm Q_S$.  Then by Lemma~\ref{lem:quotient}, if there are any objects of $H(\undernorm S)$ with characteristic monoid $\undernorm Q_S$ then they are parameterized by the quotient:
\begin{equation} \label{eqn:13}
\bigl[ \Hom(\undernorm M^{\rm gp} / \pi^\ast \overnorm M_S^{\rm gp}, \mathcal O_X^\ast) \big/ \Hom(\undernorm Q_S^{\rm gp} / \overnorm M_S^{\rm gp}, \mathcal O_S^\ast) \bigr]
\end{equation}
The chain of equalities at the end of Section~\ref{sec:characteristic}, along with Theorem~\ref{thm:units}, implies that
\begin{equation*}
\Hom(\overnorm Q_S^{\rm gp} / \overnorm M_S^{\rm gp}, \mathcal O_S^\ast) = \Hom(\overnorm M^{\rm gp} / \pi^\ast \overnorm M_S^{\rm gp}, \mathcal O_X^\ast) 
\end{equation*}
so the quotient~\eqref{eqn:13} is trivial.  
\end{proof}

\bibliographystyle{amsalpha}
\bibliography{minimal}

\providecommand{\bysame}{\leavevmode\hbox to3em{\hrulefill}\thinspace}
\providecommand{\MR}{\relax\ifhmode\unskip\space\fi MR }
\providecommand{\MRhref}[2]{%
  \href{http://www.ams.org/mathscinet-getitem?mr=#1}{#2}
}
\providecommand{\href}[2]{#2}
\begin{thebibliography}{ACGM10}

\bibitem[AC11]{AC}
D.~{Abramovich} and Q.~{Chen}, \emph{{Stable logarithmic maps to
  Deligne--Faltings pairs II}}, February 2011, \url{arXiv:1102.4531}.

\bibitem[ACGM10]{ACGM}
D.~{Abramovich}, Q.~{Chen}, W.~D. {Gillam}, and S.~{Marcus}, \emph{{The
  Evaluation Space of Logarithmic Stable Maps}}, December 2010,
  \url{arXiv:1012.5416}.

\bibitem[ACMW]{logbd}
D.~Abramovich, Q.~Chen, S.~Marcus, and J.~Wise, \emph{Boundedness of the space
  of stable logarithmic maps}, To appear in J.\ Eur.\ Math.\ Soc., Available
  online: \href{http://arxiv.org/abs/1408.0869}{\texttt{arXiv:1408.0869}}.

\bibitem[{Che}10]{Chen}
Q.~{Chen}, \emph{{Stable logarithmic maps to Deligne-Faltings pairs I}}, August
  2010, \url{arXiv:1008.3090}.

\bibitem[Gil12]{Gillam}
W.~D. Gillam, \emph{Logarithmic stacks and minimality}, International Journal
  of Mathematics \textbf{23} (2012), no.~07, 1250069.

\bibitem[GS13]{GS}
Mark Gross and Bernd Siebert, \emph{Logarithmic {G}romov-{W}itten invariants},
  J. Amer. Math. Soc. \textbf{26} (2013), no.~2, 451--510. \MR{3011419}

\bibitem[Kat89]{Kato}
Kazuya Kato, \emph{Logarithmic structures of {F}ontaine-{I}llusie}, Algebraic
  analysis, geometry, and number theory ({B}altimore, {MD}, 1988), Johns
  Hopkins Univ. Press, Baltimore, MD, 1989, pp.~191--224. \MR{1463703
  (99b:14020)}

\bibitem[SGA73]{sga4-3}
\emph{Th\'eorie des topos et cohomologie \'etale des sch\'emas. {T}ome 3},
  Lecture Notes in Mathematics, Vol. 305, Springer-Verlag, Berlin, 1973,
  S{\'e}minaire de G{\'e}om{\'e}trie Alg{\'e}brique du Bois-Marie 1963--1964
  (SGA 4), Dirig{\'e} par M. Artin, A. Grothendieck et J. L. Verdier. Avec la
  collaboration de P. Deligne et B. Saint-Donat. \MR{MR0354654 (50 \#7132)}

\bibitem[Wis]{minimal}
J.~Wise, \emph{Moduli of morphisms of logarithmic schemes}, Available online:
  \href{http://arxiv.org/abs/1408.0037}{\texttt{arXiv:1408.0037}}.

\end{thebibliography}

\end{document}